\newcommand{\comment}[1]{}
\newtheorem{lem}{Lemma}[section]
\newtheorem{propn}{Proposition}[section]
\newtheorem{thm}{Theorem}[section]
\theoremstyle{remark}
\theoremstyle{definition}
\newcommand{\R}{\mathbb R}
\newcommand{\C}{\mathbb C}
\newcommand{\D}{\delta}
\newcommand{\VE}{\varepsilon}
\newcommand{\lm}{\lambda}
\newcommand{\be}{\begin{equation}}
\newcommand{\ee}{\end{equation}}
\newcommand{\bee}{\begin{equation*}}
\newcommand{\eee}{\end{equation*}}
\begin{document}
\title{Simplices and sets of positive upper density in $\R^d$ }
\author{Lauren Huckaba\quad\quad\quad Neil Lyall\quad\quad\quad\'Akos Magyar}
\address{Department of Mathematics, The University of Georgia, Athens, GA 30602, USA}
\email{lhuckaba@math.uga.edu}
\address{Department of Mathematics, The University of Georgia, Athens, GA 30602, USA}
\email{lyall@math.uga.edu}
\address{Department of Mathematics, The University of Georgia, Athens, GA 30602, USA}
\email{magyar@math.uga.edu}

\subjclass[2000]{11B30, 42B25, 42A38}

\begin{abstract}
We prove an extension of Bourgain's theorem on pinned distances in measurable subset of $\mathbb{R}^2$ of positive upper density, namely Theorem $1^\prime$ in \cite{B}, to pinned non-degenerate $k$-dimensional simplices in measurable subset of $\mathbb{R}^{d}$ of positive upper density whenever $d\geq k+2$ and $k$ is any positive integer.
\end{abstract}
\maketitle

\setlength{\parskip}{1pt}


\section{Introduction}


Recall that the \emph{upper density} $\overline{\D}$ of a measurable set $A\subseteq\R^d$ is defined by \[\overline{\D}(A)=\limsup_{N\rightarrow\infty}\frac{|A\cap B_N|}{|B_N|},\]
where $|\cdot|$ denotes Lebesgue measure on $\R^d$ and 
$B_N$ denotes the cube $[-N/2,N/2]^d$.


\subsection{Existing results}

A result of Katznelson and Weiss \cite{FKW} states that if $A$ is a measurable subset of $\mathbb{R}^2$ of positive upper density, then its distance set
\[\text{dist}(A)=\{|x-y|\,:\, x,y\in A\}\] contains all large numbers. This result was later reproven using Fourier analytic techniques by Bourgain in \cite{B}.  Bourgain in fact established more, namely the following generalization and ``pinned variant''.

\begin{thm}[Theorem 2 in \cite{B}]\label{BourSimp}
Let $\Delta$ be a fixed non-degenerate $k$-dimensional simplex. If $A$ is a measurable subset of $\R^d$ of positive upper density with $d\geq k+1$, then
there exists a threshold $\lm_0=\lm_0(A,\Delta)$ 
such that for all $\lambda\geq \lm_0$ one has 
 \be
x+\lm\cdot U(\Delta)\subseteq A\ee
for some $x\in A$ and $U\in SO(d)$.
\end{thm}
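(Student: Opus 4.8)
The plan is to reduce \ref{BourSimp} to the strict positivity, for every sufficiently large $\lm$, of a multilinear integral counting congruent copies of $\lm\Delta$ with all vertices in $A$, and then to establish this positivity by a Fourier-analytic study of the configuration measure combined with a multi-scale argument. After a translation we may assume $\Delta=\{0,v_1,\dots,v_k\}$; set $\D:=\overline{\D}(A)>0$. Since this is an \emph{upper} density there is a sequence $N\to\infty$ with $|A\cap B_N|\ge(\D/2)N^d$, and it suffices to produce $\lm_0$ and $c>0$, independent of $N$, so that for $f=\mathbf 1_{A\cap B_N}$ one has
\[
\Lambda_\lm(f):=\int_{SO(d)}\int_{\Rn} f(x)\,f(x+\lm Uv_1)\cdots f(x+\lm Uv_k)\,dx\,dU>0
\qquad\text{for all }\lm_0\le\lm\le cN;
\]
letting $N\to\infty$ along the sequence then yields $x\in A$ and $U\in SO(d)$ with $x+\lm\cdot U(\Delta)\subseteq A$ for every $\lm\ge\lm_0$. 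Writing $\mu^\Delta_\lm$ for the $SO(d)$-invariant probability measure on the compact submanifold of $(\Rn)^k$ of all $(t_1,\dots,t_k)$ with $\{0,t_1,\dots,t_k\}$ congruent to $\lm\Delta$ --- a genuine submanifold, carrying a measure with the Fourier and regularity properties used below precisely because $d\ge k+1$ --- we have $\Lambda_\lm(f)=\int f(x)F_\lm(x)\,dx$ with $F_\lm(x)=\int f(x+t_1)\cdots f(x+t_k)\,d\mu^\Delta_\lm(t)$.

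To analyze $\Lambda_\lm(f)$ I would fix a small $\eps>0$ and a nonnegative smooth bump $\psi$, split $F_\lm=F_\lm^{\mathrm{sm}}+F_\lm^{\mathrm{err}}$ by mollifying $\mu^\Delta_\lm$ at scale $\eps\lm$ in the relevant variables, and run an induction on the number of vertices $k$, the base case $k=1$ being the theorem of Katznelson and Weiss on distance sets. For the inductive step one peels off the last vertex $v_k$: conditioned on the first $k-1$ vertices realizing a copy of the opposite face --- still a non-degenerate $(k-1)$-simplex --- the admissible positions of the $k$-th vertex form a $(d-k)$-dimensional sphere, and since $d-k\ge1$ its surface measure has enough Fourier decay that the high-frequency part (frequencies $\gtrsim(\eps\lm)^{-1}$ in the $t_k$-variable) of $F_\lm^{\mathrm{err}}$ contributes at most $O(\eps^c)\,N^d$ to $\Lambda_\lm$, uniformly in $\lm$ (in the allowed range) and in $N$; the low-frequency part is absorbed into $F_\lm^{\mathrm{sm}}$, and the first $k-1$ vertices are controlled by the inductive hypothesis applied to a suitable nonnegative function in place of $f$. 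The outcome is a bound $|\Lambda_\lm^{\mathrm{err}}(f)|\le\omega(\eps)\,N^d$ with $\omega(\eps)\to0$ as $\eps\to0$, reducing everything to a lower bound $\Lambda_\lm^{\mathrm{sm}}(f)\gtrsim_{\D,\eps}N^d$ valid for \emph{every} large $\lm$.

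This last point is the crux, and the step I expect to be the main obstacle. A soft pigeonhole over a lacunary set of scales --- using that $f$ convolved with a bump at scale $\eps\lm$ is close to $f$ in an averaged sense --- gives the desired lower bound for \emph{some} $\lm$ in each long dyadic window, i.e.\ for a lacunary sequence of scales. Upgrading this to \emph{every} $\lm\ge\lm_0$ cannot be done by a single-scale Fourier estimate: writing $\Lambda_\lm^{\mathrm{sm}}(f)=\int|\widehat f(\xi)|^2\,m_\lm(\xi)\,d\xi$ (a $k$-fold variant), the multiplier $m_\lm$ is bounded by $1$ and is $O(\eps^c)$ for $|\xi|\gtrsim(\eps\lm)^{-1}$, but it oscillates in sign on the band $\lm^{-1}\lesssim|\xi|\lesssim(\eps\lm)^{-1}$ and its contribution from $|\xi|\lesssim\lm^{-1}$ tends to $0$ as $\lm\to\infty$; thus a genuine geometric/density input beyond Fourier orthogonality is required, and it must apply at every scale. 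This is precisely where Bourgain's multi-scale device enters: one plays the scale $\lm$ against a much smaller, carefully chosen auxiliary scale on which a Katznelson--Weiss type density argument is available, and transfers the resulting lower bound back to scale $\lm$ at the cost of an $\omega(\eps)$-error. Reproducing this argument, while arranging that the $O(\eps)$-errors accumulated over the $k$ peeling steps stay dominated by a single small $\eps$ chosen in terms of $\D$ (with $\lm_0$ then chosen large in terms of $\eps$), is the technical heart of the proof.
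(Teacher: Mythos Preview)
Your overall framework---reducing to positivity of the multilinear count $\Lambda_\lm(f)$, decomposing via mollification at scale comparable to $\lm$, and controlling the high-frequency error through the decay of the $(d-k)$-sphere measure---matches the paper's approach. However, your description of the crucial ``every $\lm$'' step is off, and this is not a minor point of exposition: the mechanism you describe is not the one that works.

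You write that one ``plays the scale $\lm$ against a much smaller, carefully chosen auxiliary scale on which a Katznelson--Weiss type density argument is available, and transfers the resulting lower bound back to scale $\lm$.'' That is not what happens. The correct device is a \emph{dichotomy}: one shows that if the count $\Lambda_\lm(f)$ is too small (below $\D^{k+1}-\VE$ in normalized terms), then a definite amount of $L^2$-mass of $\widehat{1_A}$ must sit on the annulus $\Omega_\lm=\{\eta^2\lm^{-1}\le|\xi|\le\eta^{-2}\lm^{-1}\}$, specifically $\int_{\Omega_\lm}|\widehat{1_A}|^2\gg\VE^2|A|$. This is proved by writing $f_1=f*\psi_{\eta^{-1}\lm}$, telescoping $\mathcal A^{(k)}_\lm(f,\dots,f)$ against $f_1^k$ (all $k$ steps at once, not by induction), showing $\langle f,\D^k-f_1^k\rangle\ll\eta|A|$ by a direct density calculation, and controlling the very-high-frequency piece $f-f*\psi_{\eta^2\lm}$ via the decay estimate you already identified. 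What remains is the intermediate-frequency piece $f_2-f_1$, whose contribution is exactly the annular $L^2$-mass. The upgrade to all large $\lm$ is then a pure $L^2$-pigeonhole: if there were $J\ggg\VE^{-2}$ bad scales $\lm^{(j)}$ chosen lacunarily enough that the $\Omega_{\lm^{(j)}}$ are disjoint, summing gives $\sum_j\int_{\Omega_{\lm^{(j)}}}|\widehat{1_A}|^2\gg J\VE^2|A|>|A|$, contradicting Plancherel.

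So no auxiliary-scale transfer, no Katznelson--Weiss input, and no induction on $k$ are needed; the argument is a single-pass telescoping plus the disjoint-annuli pigeonhole. Your sketch would likely get stuck trying to make the ``transfer'' step precise, because there is no such step---the point is rather that \emph{failure} at scale $\lm$ deposits Fourier mass in a localized band, and total mass is finite. Once you reorganize around this dichotomy the proof is short.
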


In Theorem \ref{BourSimp}, and throughout this article, we refer to a set $\Delta=\{0,v_1,\dots,v_k\}$ of points in $\R^{k}$ as  a non-degenerate $k$-dimensional simplex if the vectors $\{v_1,\dots,v_k\}$ are linearly independent.

\begin{thm}[Pinned distances, Theorem $1^\prime$ in \cite{B}]\label{BourPinned}
If $A$ is a measurable subset of $\R^2$ of positive upper density, then there exist $\lm_0=\lm_0(A)$ such that for any given  $\lambda_1\geq \lm_0$ there is a fixed $x\in A$ such that
\be\label{opt3}
A\cap (x+\lm\cdot S^1)\ne\emptyset\ee
for all $\lm_0\leq\lm\leq\lm_1$, where $S^1$ denotes the unit circle centered at the origin in $\R^2$.
\end{thm}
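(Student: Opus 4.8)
\textit{Reduction to a finitary statement.} Since $\overline{\D}(A)=\de>0$, there is a sequence $N\to\infty$ along which $|A\cap B_N|\ge(\de-o(1))|B_N|$; as only circles of radius $\le\lm_1$ enter the conclusion, I would fix such an $N$ with $N\gg\lm_1$ and work with $f=1_{A\cap B_{2N}}$, whose mass is $\gtrsim\de|B_N|$ and sits well inside $B_{2N}$. One may also assume $A$ is closed (pass to a closed subset of density arbitrarily close to $\de$). With $\si_\lm$ the normalized arclength measure on $\lm S^1$, so that $(f*\si_\lm)(x)=\int_{S^1}f(x+\lm\omega)\,d\omega$, the theorem reduces to showing that
\[\bigl|\{x\in A\cap B_N:(f*\si_\lm)(x)=0\text{ for some }\lm\in[\lm_0,\lm_1]\}\bigr|<|A\cap B_N|.\]
A compactness argument — replacing exact circles $x+\lm S^1$ by thin annuli, letting the thickness tend to $0$ along a sequence, and using that $A$ is closed and that the good-pin sets are nested in the thickness — then lets one instead work with the smoothed spherical averages $f*\si_\lm*\vp_{\eps\lm}$, where $\vp$ is a fixed approximate identity.

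\textit{The per-scale decomposition.} For each $\lm$ I would split $\si_\lm=\si_\lm^{\mathrm{main}}+\si_\lm^{\mathrm{med}}+\si_\lm^{\mathrm{high}}$ by smooth Littlewood--Paley cutoffs adapted to frequency $1/\lm$: $\si_\lm^{\mathrm{main}}$ supported at $|\xi|\lesssim c_0/\lm$, with $c_0$ below the first positive zero of $\widehat\si$ so that $\widehat\si(\lm\xi)$ is positive and bounded below there; $\si_\lm^{\mathrm{high}}$ at $|\xi|\gtrsim(\eps\lm)^{-1}$, where the decay $|\widehat\si(\xi)|\lesssim|\xi|^{-1/2}$ in $\R^2$ makes $|\widehat{\si_\lm^{\mathrm{high}}}|$ small; and $\si_\lm^{\mathrm{med}}$ the intermediate band. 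The \emph{main term is controlled pointwise in} $\lm$: because $f$ carries $\gtrsim\de^2|B_N|$ of its $L^2$-energy at frequencies $\lesssim1/\lm$ — this is where $N\gg\lm_1$ is used, so that the density of $A$ in $B_N$ is genuinely visible at low frequency — one gets $\langle f,f*\si_\lm^{\mathrm{main}}\rangle\gtrsim_\de|B_N|$, hence (since $\si_\lm^{\mathrm{main}}$ has $L^1$-bounded kernel) $(f*\si_\lm^{\mathrm{main}})(x)>c(\de)$ on a subset of $A\cap B_N$ of measure $\gtrsim_\de|B_N|$, for every $\lm\in[\lm_0,\lm_1]$. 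The \emph{high term is controlled uniformly in} $\lm$: decomposing $\si_\lm^{\mathrm{high}}$ dyadically in frequency and applying Bourgain's circular maximal function theorem on $L^p(\R^2)$, $p>2$, to each piece — the extra $\widehat\si$-decay outrunning the frequency-loss of the maximal bound — gives $\bigl\|\sup_{\lm\in[\lm_0,\lm_1]}|f*\si_\lm^{\mathrm{high}}|\bigr\|_{L^p}\lesssim\eps^{c}\|f\|_{L^p}$, so this supremum exceeds $c(\de)/4$ only on a set of measure $\eta(\eps,\de)|B_N|$ with $\eta\to0$ as $\eps\to0$.

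\textit{The medium term and the simultaneity obstacle.} The term $\si_\lm^{\mathrm{med}}$ is the crux: its multiplier does not decay, and $\sup_\lm|f*\si_\lm^{\mathrm{med}}|$ is \emph{not} small in any $L^p$ — exactly the difficulty already present in Theorem~\ref{BourSimp} in dimension two, which Fourier decay alone cannot resolve. I would handle it, as in Bourgain's two-dimensional argument, by an $L^2$ density-increment scheme: if a non-negligible fraction of the pins in $A\cap B_N$ had $|(f*\si_\lm^{\mathrm{med}})(x)|>c(\de)/4$ for \emph{some} $\lm\in[\lm_0,\lm_1]$ (or, similarly, had the main-term lower bound fail for some such $\lm$), then $f$ would carry substantial Fourier mass in a dyadic band at a scale comparable to a $\lm\in[\lm_0,\lm_1]$, forcing a density increment $\de\mapsto\de+c'(\de)$ for $A$ on a sub-ball at that scale; since the density cannot exceed $1$, iterating $O_\de(1)$ times drives that fraction to $0$. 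Assembling the three inputs, the bad-pin set lies in
\[\{x\in A:\inf_\lm(f*\si_\lm^{\mathrm{main}})(x)\le c(\de)\}\cup\{\sup_\lm|f*\si_\lm^{\mathrm{med}}|>c(\de)/4\}\cup\{\sup_\lm|f*\si_\lm^{\mathrm{high}}|>c(\de)/4\},\]
which for $\eps=\eps(\de)$ small has measure $<|A\cap B_N|$; letting the annulus thickness tend to $0$ produces the pin, and the scale beyond which the error terms can overwhelm the main term — essentially the scale above which $A$ begins to ``look uniform'' — is what fixes the threshold $\lm_0=\lm_0(A)$. \textbf{The main obstacle} I anticipate is making the density-increment argument operate \emph{simultaneously across the whole continuum} $\lm\in[\lm_0,\lm_1]$ rather than one scale at a time (closely tied to the essential use of the $\R^2$ circular maximal function), and it is precisely the higher-dimensional, higher-$k$ counterpart of these steps that the remainder of the paper must develop.
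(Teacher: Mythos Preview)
The paper does not prove this statement; it is quoted as Bourgain's Theorem~$1'$ from~\cite{B}, and the paper's own contribution (Theorem~\ref{Thm2}) is its extension to $k$-simplices in $\R^d$ for $d\ge k+2$. The method developed here---an $L^2$ maximal-function dichotomy via Propositions~\ref{Thm2dic}, \ref{P1}, and \ref{P2}---rests on Stein's spherical maximal theorem in $L^2$, which fails in $\R^2$; the authors explicitly remark that reaching the borderline $d=k+1$ (and hence the $\R^2$ case you are addressing) would require the geometric input of Bourgain's circular maximal theorem~\cite{Bo_cir} rather than Fourier decay alone. So there is no proof in the paper to compare your sketch against.

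That said, your outline is close in spirit to Bourgain's original argument, and you correctly identify the circular maximal theorem as the ingredient this paper's framework lacks. Two points need correcting. First, the ``medium'' term is not disposed of by a density increment. The mechanism---both in Bourgain and in Proposition~\ref{Thm2dic} here---is a Fourier $L^2$-mass dichotomy: failure of the pinned conclusion on the window $[\lm_0,\lm_1]$ forces $\gg\VE^2$ of the $L^2$-mass of $\widehat{1_A}$ into the single annulus $\{\eta^2\lm_1^{-1}\le|\xi|\le\eta^{-2}\lm_0^{-1}\}$; one then assumes failure on a sequence of geometrically separated windows, obtains disjoint annuli, and contradicts Plancherel. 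No sub-ball is selected and the density of $A$ is never increased. Second, your main-term cutoff $\si_\lm^{\mathrm{main}}$ is adaptive in $\lm$, so the set of good pins varies with $\lm$ and you have no $\inf_\lm$ control---this is precisely the ``simultaneity obstacle'' you flag but do not resolve. The standard fix, used both by Bourgain and in Section~\ref{proof2} here, is to mollify once at the fixed coarsest scale $\eta^{-1}\lm_1$: the main term $f_1=f*\psi_{\eta^{-1}\lm_1}$ is then independent of $\lm$, its pointwise lower bound is a single statement (Lemma~\ref{f1k}), and the entire burden of $\lm$-uniformity is transferred to the maximal operator applied to $f-f_1$, which is exactly what the circular maximal theorem is there to handle.
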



\subsection{New results}


Throughout this article, we use $\mu$ to denote the unique Haar measure on $SO(d)$.

Our first result is the following (optimal) strengthening of Theorem \ref{BourSimp} above. 

\begin{thm}[Density of Embedded Simplices]\label{Thm1}
Let $\Delta=\{0,v_1,\dots,v_k\}\subseteq\R^k$ be a fixed non-degenerate $k$-dimensional simplex and $\VE>0$.

If $A$ is a measurable subset of $\R^d$ with $d\geq k+1$, then there exist $\lm_0=\lm_0(A,\Delta,\VE)$ such that
\be\label{DensityConclusionUD}
\int_{SO(d)}\overline{\D}(A\cap(A+\lm\cdot U(v_1))\cap\cdots \cap(A+\lm\cdot U(v_k)))\,d\mu(U)>\overline{\D}(A)^{k+1}-\VE\ee
for all $\lambda\geq \lm_0$.
In particular, for each $\lambda\geq \lm_0$ we may conclude that there exist $U\in SO(d)$ such that
\be\label{opt1}
\overline{\D}(A\cap(A+\lm\cdot U(v_1))\cap\cdots \cap(A+\lm\cdot U(v_k)))>\overline{\D}(A)^{k+1}-\VE\ee
and there exist $x\in A$ such that
\be\label{opt2}
\mu\bigl(\bigl\{U\in SO(d)\,:\, x+\lm\cdot U(\Delta)\subseteq A\bigr\}\bigr)>\overline{\D}(A)^{k}-2\VE.\ee
\end{thm}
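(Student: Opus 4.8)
The plan is to prove the averaged inequality \eqref{DensityConclusionUD}, since \eqref{opt1} follows immediately by noting that the integrand is a bounded function of $U$ (so it must exceed its average somewhere), and \eqref{opt2} follows from \eqref{DensityConclusionUD} by a Fubini-type rearrangement: writing the integral as $\int\int \mathbf{1}_A(x)\prod_j \mathbf{1}_A(x+\lm\cdot U(v_j))$ and changing the order of integration in $x$ and $U$, a pigeonholing argument on the resulting average over $U$ (against the near-optimal lower bound $\overline{\D}(A)^{k+1}-\VE$, together with the fact that the inner $x$-integral is at most $\overline{\D}(A)$) produces a single $x\in A$ for which the measure of good rotations is at least $\overline{\D}(A)^k - 2\VE$; the loss of the extra $\VE$ and the factor structure is exactly what one gets from the elementary inequality ``if $\int f \geq c\cdot m - \VE$ with $0\le f\le m$ then $|\{f > cm - 2\VE\}| \ge \VE/m \cdot(\dots)$'' suitably normalized — I would state this as a short lemma.

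For the main inequality \eqref{DensityConclusionUD}, the approach is the now-standard Bourgain-type Fourier-analytic argument, adapted to the simplex setting and pushed to extract the optimal constant $\overline{\D}(A)^{k+1}$. First I would fix a sequence of cubes $B_{N}$ realizing the upper density and pass to normalized indicator functions $f = \mathbf{1}_{A\cap B_N}$, reducing matters to a lower bound, uniform in $N$ (up to $o(1)$), for the multilinear average
\[
\Lambda_\lm(f) = \int_{SO(d)} \int_{\R^d} f(x)\, f(x+\lm U v_1)\cdots f(x+\lm U v_k)\, dx\, d\mu(U).
\]
The key device is to decompose each copy of $f$ into a ``low-frequency'' part $f * \psi_R$ (averaged at a large scale $R$) and a high-frequency remainder, and to split the spherical/simplex-averaging operator dyadically in frequency. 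The low-frequency contribution, after a telescoping/multilinear expansion, is comparable to $\int (f*\psi_R)^{k+1}$, which by Jensen/Hölder is at least $(\int f*\psi_R)^{k+1} \gtrsim \overline{\D}(A)^{k+1} - \VE$ once $R$ and $N$ are chosen appropriately; this is where the optimal constant comes from. The remaining terms must be shown to be negligible as $\lm\to\infty$.

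The heart of the matter, and the step I expect to be the main obstacle, is controlling the ``middle'' and ``high'' frequency pieces of the simplex-averaging multiplier. For the single-distance case $k=1$ this is Bourgain's bound on the decay of the Fourier transform of surface measure on the sphere combined with an $L^2$ argument; for general non-degenerate $k$-simplices one needs the analogous decay estimate for the measure on the space of configurations $\{U(v_1),\dots,U(v_k)\}$, i.e. an orbit of $SO(d)$ acting on $(\R^d)^k$, which requires $d\ge k+1$ for the orbit to be a genuine (codimension-positive) submanifold with enough curvature. The technical work is to obtain a quantitative ``$\lm$-decay'' estimate for the associated multilinear multiplier — either by iterating the single-sphere estimate one vertex at a time (peeling off $v_k$, then $v_{k-1}$, etc., conditioning on the positions of the earlier vertices) or by directly analyzing the oscillatory integral over $SO(d)$ via stationary phase. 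One then interpolates: the very highest frequencies are killed by the decay estimate, the very lowest are handled by the $L^2$ almost-orthogonality/approximate-identity argument above, and a compactness-in-frequency argument (the multiplier is continuous and vanishes at infinity) disposes of any bounded annulus of ``intermediate'' frequencies, at the cost of taking $\lm$ large depending on $A$ and $\VE$. Assembling these three regimes and tracking that only the low-frequency main term survives in the limit yields \eqref{DensityConclusionUD}.
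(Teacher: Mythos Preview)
Your overall architecture is right and matches the paper's: reduce to a localized count $\Lambda_\lambda(1_{A_N})$, split off a smoothed piece $f_1=f*\psi_R$ to produce the main term $\delta^{k+1}$, and control the remainder via decay of the Fourier transform of the configuration measure. The derivations of \eqref{opt1} and \eqref{opt2} from \eqref{DensityConclusionUD} are also fine.

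The genuine gap is your treatment of the ``intermediate'' frequencies. You write that a compactness-in-frequency argument (the multiplier is continuous and vanishes at infinity) disposes of a bounded annulus by taking $\lambda$ large. This does not work, for two linked reasons. First, the smoothing scale cannot be chosen independently of $\lambda$: for the replacement $\mathcal{A}^{(j)}_\lambda(\dots,f_1)\approx f_1\,\mathcal{A}^{(j-1)}_\lambda(\dots)$ you need $R\gg\lambda$, while to kill $f-f_1$ by multiplier decay you need $R\ll\lambda$; these are incompatible, so one is forced to take $R\sim\eta^{-1}\lambda$. Second, with that choice the residual piece $f_2-f_1$ (where $f_2=f*\psi_{\eta^2\lambda}$) has Fourier support on the annulus $\Omega_\lambda=\{\eta^2\lambda^{-1}\le|\xi|\le\eta^{-2}\lambda^{-1}\}$, and on this annulus the multiplier $\widehat{d\sigma^{(d-j)}}(\lambda\xi)$ is of size $\sim 1$, not small. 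So neither compactness nor decay gives you anything here; the only control you have is $\|f_2-f_1\|_2^2\le\int_{\Omega_\lambda}|\widehat{1_{A_N}}|^2$, and there is no reason this is $o(|A_N|)$ for an individual $\lambda$ (note also that $N$ must be chosen after the $\lambda$'s, so you cannot appeal to any limiting behavior in $\lambda$ for fixed $N$).

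The missing idea, which is the actual content of the paper's reduction, is a dichotomy-plus-pigeonhole: one proves that if the count fails at $\lambda$ then $\int_{\Omega_\lambda}|\widehat{1_{A_N}}|^2\gg\varepsilon^2|A_N|$, and then observes that if the theorem failed one could choose $J\gg\varepsilon^{-2}$ well-separated scales $\lambda^{(1)},\dots,\lambda^{(J)}$ so that the annuli $\Omega_{\lambda^{(j)}}$ are pairwise disjoint. Summing and applying Plancherel gives $1\ge\sum_j\int_{\Omega_{\lambda^{(j)}}}|\widehat{1_{A_N}}|^2/|A_N|\gg J\varepsilon^2>1$, a contradiction. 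Replace your compactness step with this argument and the proof goes through.
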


The main result of this paper is the following (optimal) extension of Bourgain's pinned distances theorem, Theorem \ref{BourPinned} above, to non-degenerate $k$-dimensional simplices when $k\geq2$.

\begin{thm}[Density of Embedded Pinned Simplices]\label{Thm2}
Let $\Delta=\{0,v_1,\dots,v_k\}\subseteq\R^k$ be a fixed non-degenerate $k$-dimensional simplex and $\VE>0$.

If $A$ is a measurable subset of $\R^d$ with $d\geq k+2$, then there exist $\lm_0=\lm_0(A,\Delta,\VE)$ such that for any given  $\lambda_1\geq \lm_0$ there is a fixed $x\in A$ such that
\be\label{opt3}
\mu\bigl(\bigl\{U\in SO(d)\,:\, x+\lm\cdot U(\Delta)\subseteq A\bigr\}\bigr)>\overline{\D}(A)^{k}-\VE\quad\text{for all} \quad \lm_0\leq\lm\leq\lm_1.\ee
\end{thm}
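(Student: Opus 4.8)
The plan is to extract the pinned statement from the non-pinned density estimate \eqref{DensityConclusionUD} of Theorem \ref{Thm1} by an averaging argument over the pinned point $x$, combined with a multiscale/pigeonhole decomposition of the interval $[\lm_0,\lm_1]$. The key difficulty, exactly as in Bourgain's original treatment of the $k=1$ case, is that the conclusion must hold \emph{simultaneously} for all $\lm$ in the interval with a single pin $x$; a naive union bound over a discretization of $[\lm_0,\lm_1]$ fails because the number of scales is unbounded. To circumvent this I would first establish a ``quantitative continuity'' or stability estimate showing that the relevant counting function $U\mapsto \mathbf 1_A(x)\prod_{j}\mathbf 1_A(x+\lm U(v_j))$, suitably averaged over a ball and over $U$, varies slowly in $\lm$ on a multiplicative scale — this is where the hypothesis $d\geq k+2$ (one extra dimension beyond Theorem \ref{Thm1}) is used, providing the extra room for a spherical-average/Fourier-decay argument.

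Concretely, I would set up the bilinear/multilinear counting form
\[
\Lambda_\lm^N(x)=\int_{SO(d)} \Bigl(\frac{1}{|B_N|}\int_{B_N}\mathbf 1_A(y)\,\mathbf 1_A(y+\lm U(v_1))\cdots\mathbf 1_A(y+\lm U(v_k))\,dy\Bigr)d\mu(U),
\]
and decompose the inner configuration measure $\sigma_\lm$ (normalized surface measure on the space of isometric copies of $\lm\cdot\Delta$ pinned at one vertex) as $\sigma_\lm = \sigma_\lm * \psi_\rho + (\sigma_\lm - \sigma_\lm*\psi_\rho)$ for a smooth approximate identity $\psi_\rho$ at scale $\rho$. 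The main term $\sigma_\lm*\psi_\rho$ is slowly varying in $\lm$ (its Fourier transform is essentially constant on frequency balls of radius $\sim\rho^{-1}$), so it is harmless; the error term has a small Fourier transform off a bounded set of frequencies precisely because the $k$-simplex configuration space sits in codimension $\geq 2$ when $d\geq k+2$, giving the needed decay $\widehat{\sigma_\lm-\sigma_\lm*\psi_\rho}=O(\rho^c)$ uniformly. This is the analytic heart of the argument and the step I expect to be the main obstacle — adapting Bourgain's circular-average $L^2$ estimate to the $k$-dimensional simplex setting and tracking the uniformity in $\lm$.

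Given this, I would argue as follows. Fix $\VE>0$ and choose $\lm_0$ from Theorem \ref{Thm1} with error parameter $\VE/2$. For a sequence $N\to\infty$ realizing the $\limsup$ in \eqref{DensityConclusionUD}, the spatial average over $y\in B_N$ of $\mu(\{U: y+\lm U(\Delta)\subseteq A\})\mathbf 1_A(y)$ exceeds $\overline\D(A)^{k+1}-\VE/2$ for \emph{every} $\lm\geq\lm_0$ (after passing to the main term this is uniform across a whole dyadic block of scales). Partition $[\lm_0,\lm_1]$ into finitely many blocks on which the main term is essentially constant; on each block, an averaging (Fubini) argument in $y$ produces a set of pins $y$ of density $\geq \overline\D(A)^{k}-\VE$ that work for the whole block, and these sets, being of positive density inside $B_N$ for all large $N$, have nonempty intersection over the finitely many blocks. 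A point $x$ in that intersection, which we may take to lie in $A$, satisfies \eqref{opt3} for all $\lm_0\leq\lm\leq\lm_1$ simultaneously. The only genuinely new ingredient beyond Theorem \ref{Thm1} and Bourgain's method is the uniform-in-$\lm$ smoothing estimate; everything else is a bookkeeping refinement of the $x$-averaging already implicit in the proof of \eqref{opt2}.
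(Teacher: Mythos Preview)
Your proposal has the right instinct---split off a main term that is (essentially) constant in $\lm$ and treat the rest as an error controlled using the extra dimension---but the execution you outline contains a genuine gap, and it is precisely the gap that the paper's argument is designed to close.

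The step that fails is the ``partition $[\lm_0,\lm_1]$ into finitely many blocks and intersect the good-pin sets.'' The number of blocks on which your smoothed main term is approximately constant is not bounded independently of $\lm_1/\lm_0$; it grows like $\log(\lm_1/\lm_0)$ (or worse). And the Fubini/Markov argument you invoke on each block only guarantees that the set of good pins has density $\gtrsim \VE$ inside $A$, not density close to $1$. Intersecting $J$ sets each of density $\gtrsim \VE$ gives nothing once $J\gtrsim \VE^{-1}$, so the conclusion ``these sets \dots\ have nonempty intersection over the finitely many blocks'' is unjustified. You acknowledged at the outset that a naive union bound over scales fails, but the block-and-intersect argument is essentially that same union bound in disguise.

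The paper does not try to build a good pin scale by scale. Instead it argues by contradiction: if \emph{no} $x\in A$ is good for every $\lm\in[\lm_0,\lm_1]$, then for every $x\in A$ there is some $\lm(x)$ with $\mathcal{A}^{(k)}_{\lm(x)}(f,\dots,f)(x)\leq \D^k-\VE$, and hence
\[
\sup_{\lm_0\leq\lm\leq\lm_1}\mathcal{A}^{(j)}_{\lm}(f-f_1)(x)\gg \VE \quad\text{for all } x\in A
\]
for some $1\leq j\leq k$, where $f_1=f*\psi_{\eta^{-1}\lm_1}$. This converts the problem into an $L^2$ lower bound for a \emph{maximal} operator, and the paper then proves (Propositions~\ref{P1} and~\ref{P2}) that this maximal operator is bounded on $L^2$ with a quantitative gain from the mollification. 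The hypothesis $d\geq k+2$ enters exactly here: after fixing the frame $y_1,\dots,y_{j-1}$, the innermost average is over a sphere $S^{d-j}\subseteq\R^{d-j+1}$ of dimension $d-j\geq d-k\geq 2$, so Stein's spherical maximal theorem applies slice by slice. Your ``quantitative continuity / $O(\rho^c)$'' heuristic is a shadow of this, but a pointwise bound on the multiplier $\widehat{\sigma_\lm}-\widehat{\sigma_\lm*\psi_\rho}$ is not enough; one needs the Fundamental-Theorem-of-Calculus/square-function machinery to pass from fixed-$\lm$ Fourier decay to control of the supremum over $\lm$. That maximal-function step is the missing idea in your plan.
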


We remark that Theorem \ref{Thm2} should hold whenever $d\geq k+1$. However, extending our result to this range appears to require an essentially non-Fourier analytic approach, specifically an adaptation of the geometric arguments in \emph{Bourgain's circular maximal function theorem} \cite{Bo_cir} to the configuration spaces considered in this article. We plan to address this strengthening of Theorem \ref{Thm2} in a separate article.

We further remark that both Theorem \ref{Thm1} and Theorem \ref{Thm2} also hold, with the proofs essentially unchanged, if the notion of upper density is replaced with the weaker notion of upper Banach density. 

\subsection{Outline of paper}

We will adapt Bourgain's approach in \cite {B} and deduce Theorems \ref{Thm1} and \ref{Thm2} from two quantitative compact variants, namely Propositions \ref{Thm1dic} and \ref{Thm2dic} respectively. The reduction of Theorems \ref{Thm1} and \ref{Thm2} to these ``dichotomy propositions" is carried out in Section \ref{Section2}. In Section \ref{prelim} we introduce a (natural) multi-linear averaging operator which we shall use to count the configuration under consideration as well as discuss some preliminary estimates before completing the proof of Proposition \ref{Thm1dic} in Section \ref{proof1}. In Section \ref{proof2} we reduce Proposition \ref{Thm2dic} to certain maximal function estimates over our configuration spaces, namely Propositions \ref{P1} and \ref{P2}, the proofs of which are presented in Section \ref{Maximal}.

\subsection{Further Notation}
Throughout this article we use the notation $X\ll Y$ to denote the fact that $X\leq C\, Y$ for some absolute constant $C>0$ that depends only the dimension $d$ and $X\lll Y$ to denote the fact that $X\leq c\, Y$ for some \emph{sufficiently small} constant $c>0$.

For any given set $A\subseteq\R^d$ we use $1_A$ to denote the characteristic function of the set $A$,
while for any given integrable function $f:\R^d\to\C$ we define its \emph{Fourier transform} $\widehat{f}:\R^d\to\C$, by
\be
\widehat{f}(\xi)=\int_{\R^d}f(x)e^{-2\pi i x\cdot \xi}\,dx.
\ee

\section{Reducing Theorems \ref{Thm1} and \ref{Thm2} to Key Dichotomy Propositions}

\subsection{Dichotomy Propositions}

In Section \ref{Section2} below we shall see that Theorems \ref{Thm1} and \ref{Thm2} are easy consequences of the following two quantitative compact variants, namely Propositions \ref{Thm1dic} and \ref{Thm2dic}.

Proposition \ref{Thm1dic} below (respectively Proposition \ref{Thm2dic}) establishes that if $A$ does not contain the ``expected" number of unpinned isometric copies of $\lm\cdot\Delta$ (respectively pinned isometric copies of $\lm\cdot\Delta$ with $\lm_0\leq\lm\leq \lm_1$ at some point $x\in A$), then this ``non-random" behavior will be ``detected" by the Fourier transform of the characteristic function of $A$ and result in a concentration of its $L^2$-mass on appropriate annuli.

\begin{propn}[Dichotomy for Theorem \ref{Thm1}]\label{Thm1dic}
Let $\Delta=\{0,v_1,\dots,v_k\}\subseteq\R^k$ be a fixed non-degenerate $k$-dimensional simplex, $\VE>0$, $0<\eta\lll\VE^{5/2}$, and $N\geq C_\Delta\eta^{-4}$.

If $A\subseteq B_N\subseteq \R^d$ with $d\geq k+1$, then for any $\lm$ satisfying $1\leq \lm\leq \eta^{4} N $ one of the following statements must hold:

 \begin{itemize}
\item[(i)] 
\[\int_{SO(d)}\frac{|A\cap(A+\lm\cdot U(v_1))\cap\cdots \cap(A+\lm\cdot U(v_k))|}{N^d}\,d\mu(U)>\left(\frac{|A|}{N^d}\right)^{k+1}-\VE\]
\vspace{-10pt}
\item[(ii)] \[\dfrac{1}{|A|}\int_{\Omega_{\lambda}} |\widehat{1_A}(\xi)|^2\,d\xi \gg \VE^2\]
where \[\Omega_{\lambda}=\Omega_{\lambda}(\eta)=\{\xi\in\mathbb{R}^d\,:\,\eta^2\,\lambda^{-1}\leq|\xi|\leq\eta^{-2}\lambda^{-1}\}.\]
\end{itemize}
\end{propn}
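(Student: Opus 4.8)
## Proof Proposal for Proposition \ref{Thm1dic}

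The plan is to count the configurations using the multilinear averaging operator that will be introduced in Section \ref{prelim}, and to run a Fourier-analytic dichotomy in the spirit of Bourgain's original argument. First I would express the quantity in (i) as an integral over $SO(d)$ of a multilinear expression
\[
\int_{SO(d)}\int_{\R^d} 1_A(x)\,\prod_{j=1}^k 1_A(x+\lm\cdot U(v_j))\,dx\,d\mu(U),
\]
and then smooth the surface measure carried by the orbit $\{U(v_1),\dots,U(v_k)\,:\,U\in SO(d)\}$ at scale $\lm$. Concretely, I would compare this count against the corresponding count taken with respect to a smoothed model measure $d\sigma_\lm$ (the pushforward under $U\mapsto(U(v_1),\dots,U(v_k))$ of Haar measure, dilated by $\lm$), mollified at a small scale $\delta\lm$ with $\delta$ a power of $\eta$. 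The ``main term'' — the count against the fully mollified measure — will be essentially $(|A|/N^d)^{k+1}$ up to an $O(\VE)$ error, since mollification at scale $\delta\lm \leq \eta^4 N$ interacts harmlessly with $A\subseteq B_N$ and the $L^2$ mass of $\widehat{1_A}$ near the origin (frequencies $|\xi|\lll \lm^{-1}$) contributes the product of densities. This is the step where the hypotheses $\eta \lll \VE^{5/2}$ and $N\geq C_\Delta\eta^{-4}$ get used, to absorb boundary effects and low-frequency discrepancies.

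The heart of the dichotomy is then the difference between the count against the mollified measure and the count against the genuine (un-mollified) singular measure. Using the Fourier transform and Plancherel, this difference is controlled by
\[
\int_{\R^d} |\widehat{1_A}(\xi)|^2 \,\bigl|\widehat{d\sigma_\lm}(\xi) - \widehat{d\sigma_\lm}(\xi)\,\widehat{\psi_{\delta\lm}}(\xi)\bigr|\cdots
\]
type expressions (with the intermediate factors handled by the $L^\infty$ bound $\|\widehat{1_A}\|_\infty \leq |A|$ applied to all but one of the $k+1$ copies). The key analytic input is decay of $\widehat{d\sigma_\lm}$: since $\Delta$ is non-degenerate and $d\geq k+1$, the relevant orbit measure has Fourier transform decaying like $|\lm\xi|^{-\rho}$ for some $\rho>0$ (this is a standard stationary-phase / oscillatory-integral estimate for the $SO(d)$-orbit, analogous to the decay of $\widehat{d\sigma}$ for the sphere). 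Hence the frequencies $|\xi| \gg \eta^{-2}\lm^{-1}$ contribute a negligible error (smaller than $\VE$), and the mollifier only differs from $1$ on frequencies $|\xi|\lll (\delta\lm)^{-1}$, i.e. $|\xi|$ up to roughly $\eta^2\lm^{-1}$ after choosing $\delta$ appropriately. Therefore the only possible obstruction to the mollified and genuine counts agreeing up to $O(\VE)$ comes from $L^2$ mass of $\widehat{1_A}$ supported on the annulus $\Omega_\lambda = \{\eta^2\lm^{-1}\leq |\xi|\leq \eta^{-2}\lm^{-1}\}$. If that mass is $\ll \VE^2 |A|$, then the counts agree and (i) holds; otherwise (ii) holds.

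I would organize the argument as: (1) set up the multilinear count and the smoothed model; (2) prove the main-term estimate (the mollified count $\approx (|A|/N^d)^{k+1}$), which is where the support and size hypotheses on $N,\eta$ enter; (3) prove the high-frequency negligibility using decay of $\widehat{d\sigma_\lm}$; (4) isolate the middle-frequency annulus $\Omega_\lambda$ and conclude the dichotomy by comparing its mass to the threshold $\VE^2$. The main obstacle I anticipate is step (3): establishing the right uniform decay estimate for the Fourier transform of the (dilated, mollified) $SO(d)$-orbit measure attached to a $k$-simplex, with constants depending only on $d$ and $\Delta$. Unlike the single-distance case (sphere), here one must understand the oscillatory integral over $SO(d)$ with phase $\xi\cdot U(v_j)$ summed over $j$, and verify that non-degeneracy of $\{v_1,\dots,v_k\}$ together with $d\geq k+1$ forces enough curvature/transversality to yield a power decay; this is presumably the content of the "preliminary estimates" promised for Section \ref{prelim}. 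The remaining steps are then routine bookkeeping with Plancherel, the triangle inequality, and the stated relations among $\VE$, $\eta$, $N$, and $\lm$.
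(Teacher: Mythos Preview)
Your overall Fourier-dichotomy picture (low frequencies give the main term $\delta^{k+1}$, high frequencies are killed by curvature, and any shortfall must come from $L^2$ mass on the intermediate annulus $\Omega_\lambda$) is correct and is indeed what the paper proves. But the mechanism you propose for controlling the multilinear difference does not work as written, and the fix is the main idea of the paper's argument.

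Specifically, you propose to mollify the full $SO(d)$-orbit measure on $\R^{dk}$ and bound the discrepancy by an expression of the form
\[
\int_{\R^d}|\widehat{1_A}(\xi)|^2\,\bigl|\widehat{d\sigma_\lambda}(\xi)-\widehat{d\sigma_\lambda}(\xi)\widehat{\psi_{\delta\lambda}}(\xi)\bigr|\,d\xi,
\]
handling ``the intermediate factors by $\|\widehat{1_A}\|_\infty\le|A|$.'' For $k\ge 2$ this does not close: the Fourier representation of the $(k+1)$-linear count is a $kd$-dimensional integral
\[
\int_{\R^{dk}}\widehat{1_A}(\xi_1)\cdots\widehat{1_A}(\xi_k)\,\overline{\widehat{1_A}\bigl(\textstyle\sum_j\xi_j\bigr)}\,\widehat{d\sigma_\lambda}(\xi_1,\dots,\xi_k)\,d\xi_1\cdots d\xi_k,
\]
with no two factors evaluated at the same frequency. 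Bounding all but one $\widehat{1_A}$ by $|A|$ still leaves a $kd$-dimensional integral that cannot be reduced to an $L^2(\R^d)$ estimate on $\widehat{1_A}$, and decay of $\widehat{d\sigma_\lambda}$ on $\R^{dk}$ is neither strong enough nor what one actually has available.

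The paper sidesteps this entirely by telescoping in the \emph{function} slots rather than mollifying the measure. One writes the operator $\mathcal{A}^{(k)}_\lambda$ as an iterated spherical average and, using the pointwise inequality
\[
\bigl|\mathcal{A}^{(j)}_\lambda(f,\dots,f,f_1)(x)-f_1(x)\,\mathcal{A}^{(j-1)}_\lambda(f,\dots,f)(x)\bigr|\ll\eta
\]
(with $f_1=f*\psi_{\eta^{-1}\lambda}$), peels off one mollified copy of $f$ at a time. This yields
\[
\mathcal{A}^{(k)}_\lambda(f,\dots,f)(x)\ \ge\ f_1(x)^k-\sum_{j=1}^k f_1(x)^{k-j}\,\mathcal{A}^{(j)}_\lambda(f,\dots,f,f-f_1)(x)-O(\eta),
\]
so the failure of (i), combined with the main-term lemma $\langle f,\delta^k-f_1^k\rangle\ll\eta|A|$, forces a \emph{single} index $j$ with $\langle f,\mathcal{A}^{(j)}_\lambda(f-f_1)\rangle\gg\varepsilon|A|$. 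Only at this point does one apply Cauchy--Schwarz and Plancherel, and the resulting expression genuinely is a single $\R^d$-integral involving $|\widehat{f}(\xi)|^2$ and the Fourier transform of the $(d-j)$-sphere measure $d\sigma^{(d-j)}_{y_1,\dots,y_{j-1}}$, for which the standard decay estimate suffices. A second mollification $f_2=f*\psi_{\eta^2\lambda}$ then localizes the mass to $\Omega_\lambda$.

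So your step (3) is not the obstacle you think it is: the paper never needs Fourier decay of the full orbit measure, only of individual lower-dimensional spheres, and that is classical. The actual missing idea in your outline is the telescoping reduction from the $k$-linear count to a single bad index $j$, which is what converts the problem into one where your steps (2)--(4) can be executed.
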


\begin{propn}[Dichotomy for Theorem \ref{Thm2}]\label{Thm2dic}
Let $\Delta=\{0,v_1,\dots,v_k\}\subseteq\R^k$ be a fixed non-degenerate $k$-dimensional simplex, $\VE>0$, $0<\eta\lll\VE^3$, and $N\geq C_\Delta\eta^{-4}$.

If $A\subseteq B_N\subseteq \R^d$ with $d\geq k+2$, then for any pair $(\lm_0,\lm_1)$ satisfying $1\leq\lambda_0\leq\lm_1\leq \eta^{4} N$ one of the following statements must hold:

 \begin{itemize}
\item[(i)] there exist $x\in A$  with the property that 
\[\mu\bigl(\bigl\{U\in SO(d)\,:\, x+\lm\cdot U(\Delta)\subseteq A\bigr\}\bigr)>\left(\frac{|A|}{N^d}\right)^{k}-\VE \quad \text{for all}\quad\lm_0\leq\lm\leq \lm_1\]
\vspace{-10pt}
\item[(ii)] \[\dfrac{1}{|A|}\int_{\Omega_{\lm_0,\lm_1}} |\widehat{1_A}(\xi)|^2\,d\xi \gg \VE^2\]
where \[\Omega_{\lm_0,\lm_1}=\Omega_{\lm_0,\lm_1}(\eta)=\{\xi\in\mathbb{R}^d\,:\,\eta^2\,\lm_1^{-1}\leq|\xi|\leq\eta^{-2}\lm_0^{-1}\}.\]
\end{itemize}
\end{propn}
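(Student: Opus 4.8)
\textbf{Proof plan for Proposition \ref{Thm2dic}.}
The plan is to adapt the argument for Proposition \ref{Thm1dic} (the single-scale case), but to handle the continuum of scales $\lm_0\le\lm\le\lm_1$ simultaneously via a maximal-function argument. First I would introduce, for a fixed bump function $\psi$ supported near $\Delta$ and $\lm>0$, the multilinear averaging operator $M_\lm(f_0,\dots,f_k)(x)$ that counts (weighted) copies of $\lm\cdot U(\Delta)$ based at $x$, integrated over $U\in SO(d)$; this is the operator promised for Section \ref{prelim}. Writing $N_\lm(x) = M_\lm(1_A,\dots,1_A)(x)$, the quantity $\mu(\{U: x+\lm\cdot U(\Delta)\subseteq A\})$ is (up to the smoothing by $\psi$, which one absorbs by an $\eps/2$ argument and a limiting/mollification step as in \cite{B}) essentially $N_\lm(x)$ for $x\in A$. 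The goal is thus: either there is $x\in A$ with $N_\lm(x) > (|A|/N^d)^k - \VE$ for \emph{all} $\lm\in[\lm_0,\lm_1]$, or conclusion (ii) holds.

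The second step is the Fourier-analytic decomposition of $M_\lm$. Expanding each average over $SO(d)$ and using the (by now standard, cf. \cite{B}) stationary-phase/Bessel asymptotics for the spherical measures attached to the simplex configuration space, one decomposes $\widehat{M_\lm}$ into a ``main term'' — which, when all inputs are $1_A$, reproduces the product $(|A|/N^d)^{k}$ times $1_A$-mass up to error controlled by $\eta$ — plus ``error terms'' in which at least one input function has its Fourier transform restricted to the annulus $\Omega_{\lm}(\eta)=\{\eta^2\lm^{-1}\le|\xi|\le\eta^{-2}\lm^{-1}\}$. Crucially, as $\lm$ ranges over $[\lm_0,\lm_1]$ the union of these annuli is contained in the single annulus $\Omega_{\lm_0,\lm_1}(\eta)$ appearing in (ii). So if the $L^2$-mass of $\widehat{1_A}$ on $\Omega_{\lm_0,\lm_1}(\eta)$ is \emph{small} (i.e. (ii) fails), then the error term is uniformly small for every $\lm\in[\lm_0,\lm_1]$, and we are reduced to showing that the ``main term'' is $\ge (|A|/N^d)^k - \VE/2$ at \emph{some common} $x\in A$ for all such $\lm$.

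The third and decisive step is the pigeonholing over the pin $x$, and this is where $d\ge k+2$ (rather than $d\ge k+1$) and a maximal function are needed: for a single $\lm$ an averaging argument over $x\in A$ immediately produces a good pin, but to get one $x$ that works simultaneously for the whole interval one must control the maximal operator $\sup_{\lm_0\le\lm\le\lm_1} |M_\lm^{err}(\vec f)(x)|$ in $L^p(dx)$ — this is exactly the content of Propositions \ref{P1} and \ref{P2} referenced in the outline. I would run the argument as: let $B = \{x\in A : N_\lm(x) \le (|A|/N^d)^k - \VE \text{ for some } \lm\in[\lm_0,\lm_1]\}$; by the maximal estimate together with the smallness of the annular $L^2$-mass (negation of (ii)), the measure of $B$ is strictly less than $|A|$, so $A\setminus B\neq\emptyset$ and any $x\in A\setminus B$ satisfies (i). The extra dimension enters precisely in the maximal estimates, where one gains an integrable power of $\lm$-decay from an additional Bessel-type factor; with only $d=k+1$ this decay degenerates (matching the remark in the introduction about needing Bourgain's circular maximal function theorem instead).

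\textbf{Main obstacle.} The genuinely hard part is the maximal function estimate, i.e. controlling $\sup_{\lm\in[\lm_0,\lm_1]}$ of the error part of $M_\lm$ on $L^2$ (or some $L^p$) by the annular $L^2$-mass of $\widehat{1_A}$ with a constant that does \emph{not} blow up with the length of the interval $[\lm_0,\lm_1]$. The standard device — write the $\sup$ over $\lm$ in terms of a derivative in $\lm$ plus a fixed endpoint via the fundamental theorem of calculus, $\sup_\lm |F(\lm)|^2 \ll |F(\lm_0)|^2 + \int |F(\lm)||\p_\lm F(\lm)|\,d\lm$, then apply Cauchy--Schwarz and Plancherel — forces one to also bound $\p_\lm$ of the averaging operator, which introduces an extra factor of $|\xi|$; absorbing this requires the quadratic gain in $|\xi|$-decay afforded by $d\ge k+2$. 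Everything else (the $SO(d)$-Fourier expansion for simplices, the $\eta$-bookkeeping, the mollification to pass from smoothed counts to the honest set $\{U: x+\lm\cdot U(\Delta)\subseteq A\}$) is by now routine and parallels \cite{B}.
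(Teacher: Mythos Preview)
Your proposal is correct and follows essentially the same route as the paper: assume (i) fails pointwise on $A$, telescope the multilinear average against a low-frequency mollification to produce a maximal operator acting on the ``middle-frequency'' piece of $1_A$, and then invoke exactly the $L^2$ maximal estimates you identify (the paper's Propositions \ref{P1} and \ref{P2}, proved via the fundamental-theorem-of-calculus/Cauchy--Schwarz/Plancherel device you describe, with the extra $|\xi|$-decay from $d\ge k+2$ absorbing the $\partial_\lm$ derivative). The only refinement worth noting is that the paper uses \emph{fixed} cutoffs $f_1=f*\psi_{\eta^{-1}\lm_1}$ and $f_2=f*\psi_{\eta^2\lm_0}$ at the two extreme scales (so the ``main term'' $f_1^k$ is manifestly $\lm$-independent) and linearizes the multilinear error to a single-function maximal average via the pointwise bound $|\mathcal{A}^{(j)}_\lm(g_1,\dots,g_j)|\le \mathcal{A}^{(j)}_\lm(g_j)$ before applying the maximal estimates---but this is exactly the natural implementation of your plan.
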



\subsection{Proof of Theorems \ref{Thm1} and \ref{Thm2}}\label{Section2}


\subsubsection{Proof that Proposition \ref{Thm1dic} implies Theorem \ref{Thm1}}
Let $\VE>0$ and $0<\eta\ll\VE^{5/2}$.
Suppose that $A\subseteq \R^d$ with $d\geq k+1$ is a set for which the conclusion of Theorem \ref{Thm1} fails to hold, namely that there
exists arbitrarily large integers $\lm$ for which
\[\int_{SO(d)}\overline{\D}(A\cap(A+\lm\cdot U(v_1))\cap\cdots \cap(A+\lm\cdot U(v_k)))\,d\mu(U)\leq\overline{\D}(A)^{k+1}-\VE.\]

For a fixed integer $J\ggg\VE^{-2}$ we now choose a sequence $\{\lm^{(j)}\}_{j=1}^J$ of such $\lm$'s with the additional property that
$1\leq\lm^{(j)}\leq \eta^{4} \lm^{(j+1)}$
for $1\leq j<J$.
We now choose $N$ so that $\lm^{(J)}\leq\eta^4 N$ and that we simultaneously have both that
\be\label{densities}
\overline{\D}(A)^{k+1}-\VE/2\leq\left(\frac{|A\cap B_N|}{N^d}\right)^{k+1}-\VE/4
\ee
and that
\[\int_{SO(d)}\frac{|A_N\cap(A_N+\lm^{(j)}\cdot U(v_1))\cap\cdots \cap(A_N+\lm^{(j)}\cdot U(v_k))|}{N^d}\,d\mu(U)\leq\overline{\D}(A)^{k+1}-\VE/2
\]
holds for all $1\leq j\leq J$, where $A_N=A\cap B_N$. For the last inequality we exploited Fatou's Lemma. 

Abusing notation and denoting the set $A_N=A\cap B_N$ by $A$, an application of Proposition \ref{Thm1dic}, with $\VE$ replaced with $\VE/4$, thus allows us to conclude that for this set one must have
\be
\sum_{j=1}^J\frac{1}{|A|}\int_{\Omega_{\lm^{(j)}}} |\widehat{1_A}(\xi)|^2\,d\xi\gg J\VE^2 >1.
\ee
On the other hand it follows from the disjointness property of the sets $\Omega_{\lm^{(j)}}$, which we guaranteed by our initial choice of sequence $\{\lm^{(j)}\}$, and Plancherel's Theorem that
\be
\sum_{j=1}^J\frac{1}{|A|}\int_{\Omega_{\lm^{(j)}}} |\widehat{1_A}(\xi)|^2\,d\xi\leq \frac{1}{|A|}\int_{\R^d}|\widehat{1_A}(\xi)|^2\,d\xi= 1
\ee
giving a contradiction. \qed


\subsubsection{Proof that Proposition \ref{Thm2dic} implies Theorem \ref{Thm2}}

Let $\VE>0$ and $0<\eta\ll\VE^3$.
Suppose that $A\subseteq \R^d$ with $d\geq k+2$ is a set for which the conclusion of Theorem \ref{Thm2} fails to hold, namely that there
exists arbitrarily large pairs $(\lm_0,\lm_1)$ of real numbers such that for all $x\in A$ one has
\[\mu\bigl(\bigl\{U\in SO(d)\,:\, x+\lm\cdot U(\Delta)\subseteq A\bigr\}\bigr)\leq\overline{\D}(A)^{k}-\VE\]
for some $\lm_0\leq\lm\leq\lm_1$.

For a fixed integer $J\ggg\VE^{-2}$ we choose a sequence of such pairs $\{(\lm_0^{(j)},\lm_1^{(j)})\}_{j=1}^J$ with the property that
$1\leq\lm_1^{(j)}\leq \eta^{4} \lm_0^{(j+1)}$
for $1\leq j<J$.  
We now choose $N$ so that $\lm_1^{(J)}\leq\eta^{4} N$ and 
\be
\overline{\D}(A)^k-\VE\leq\left(\frac{|A\cap B_N|}{N^d}\right)^k-\VE/2.\ee

Abusing notation and denoting the set $A\cap B_N$ by $A$, an application of Proposition \ref{Thm2dic} thus allows us to conclude that for this set one must have
\be
\sum_{j=1}^J\frac{1}{|A|}\int_{\Omega_{\lm_0^{(j)},\lm_1^{(j)}}} |\widehat{1_A}(\xi)|^2\,d\xi\gg J\VE^2 >1.
\ee
On the other hand it follows from the disjointness property of the sets $\Omega_{\lm_0^{(j)},\lm_1^{(j)}}$, which we guaranteed by our initial choice of pair sequence $\{(\lm_0^{(j)},\lm_1^{(j)})\}$, and Plancherel's Theorem that
\be
\sum_{j=1}^J\frac{1}{|A|}\int_{\Omega_{\lm_0^{(j)},\lm_1^{(j)}}} |\widehat{1_A}(\xi)|^2\,d\xi\leq \frac{1}{|A|}\int_{\R^d}|\widehat{1_A}(\xi)|^2\,d\xi= 1
\ee
giving a contradiction. \qed

\section{Preliminaries}\label{prelim}

\subsection{The multi-linear operators $\mathcal{A}^{(j)}_{\lambda}$}
Let $\Delta=\{0,v_1,\dots,v_k\}$ be our fixed $k$-dimensional simplex. Without loss of generality we may assume that $|v_1|=1$.
For each $1\leq j\leq k$ we introduce the multi-linear operator
$\mathcal{A}^{(j)}_{\lambda}$, defined initially for Schwartz functions $g_{1},\dots,g_j$, by
\be\label{Aj}
\mathcal{A}^{(j)}_{\lambda}(g_1,\dots,g_j)(x)=\int\cdots\int g_{1}(x-\lm y_1)\cdots g_j(x-\lm y_j)\,d\sigma^{(d-j)}_{y_1,\dots,y_{j-1}}(y_j)\cdots d\sigma^{(d-1)}(y_{1})
\ee
where $\sigma^{(d-1)}$ denotes the normalized measure on the sphere $S^{d-1}(0,|v_1|)\subseteq\R^d$ induced by Lebesgue measure and
$\sigma^{(d-j)}_{y_1,\dots,y_{j-1}}$
denotes, for each $2\leq j\leq k$, the normalized measure on the spheres \be
S^{d-j}_{x_1,\dots,x_{j-1}}=S^{d-1}(0,|v_j|)\cap S^{d-1}(x_1,|v_j-v_1|)\cap\cdots\cap S^{d-1}(x_{j-1},|v_j-v_{j-1}|)\ee
where $S^{d-1}(x,r)=\{x'\in\R^d\,:\,|x-x'|=r\}$.

\comment{
where $d\sigma^{(d-1)}$ denotes the measure on the unit sphere $S^{d-1}\subseteq\R^d$ induced by Lebesgue measure normalized to have total mass $1$ and
$d\sigma^{(d-j)}_{y_1,\dots,y_{j-1}}$
denotes, for each $2\leq j\leq k$, the normalized measure on the sphere \[S^{d-j}_{y_1,\dots,y_{j-1}}\subseteq 
y+[y_1,\dots,y_{j-1}]^\perp\simeq\R^{d-j+1}\]
of radius $r_j=\text{dist}(v_j,[v_1,\dots,v_{j-1}])$ centered at $y\in[y_1,\dots,y_{j-1}]$ with $y\cdot y_i=v_j\cdot v_i$
for all $1\leq i\leq j-1$.}

The multi-linear operator $\mathcal{A}^{(j)}_{\lambda}$ is a natural object for us to consider in light of the observation that it could have equivalently be defined for each $1\leq j\leq k$ using the formula
\be
\mathcal{A}^{(j)}_{\lambda}(g_1,\dots,g_j)(x):=\int_{SO(d)}g_1(x-\lm\cdot U(v_1))\cdots g_j(x-\lm\cdot U(v_{j}))\,d\mu(U)
\ee
and hence for any bounded measurable set $A\subseteq\R^d$, the quantity
\be
\bigl\langle 1_A,\mathcal{A}^{(k)}_{\lambda}(1_A,\dots,1_A)\bigr\rangle=\int_{SO(d)}|A\cap(A+\lm\cdot U(v_1))\cap\cdots \cap(A+\lm\cdot U(v_k))|\,d\mu(U).\ee

A trivial, but important, observation will be the fact that
\be\label{jtoj-1}
\Bigl| \mathcal{A}^{(j)}_{\lambda}(g_1,\dots,g_j)(x)-g_j(x)\,\mathcal{A}^{(j-1)}_{\lambda}(g_1,\dots,g_{j-1})(x)\Bigr|\leq
  \int \bigl| g_j(x-\lm y)-g_j(x)\bigr|\,d\sigma^{(d-j)}_{y_1,\dots,y_{j-1}}(y)
\ee
holds for some initial choices of frame $y_1,\dots,y_{j-1}$ with $y_i\cdot y_{i'}=v_i\cdot v_{i'}$ for $1\leq i\leq i'\leq j-1$.

\subsection{A second averaging operator and some basic estimates}

We now introduce a second averaging operator, which we also denote by $\mathcal{A}^{(j)}_{\lambda}$, defined initially for any Schwartz function $g$, by
\be
\mathcal{A}^{(j)}_{\lm}(g)(x)=\int\cdots\int\Bigl|  \int g(x-\lm y_j)\,d\sigma^{(d-j)}_{y_1,\dots,y_{j-1}}(y_j) \Bigr|\,
d\sigma^{(d-j+1)}_{y_1,\dots,y_{j-2}}(y_{j-1})\cdots d\sigma^{(d-1)}(y_{1})
\ee

Note that if the functions $g_1,\dots,g_{j-1}$ are all bounded in absolute value by $1$, then clearly
\be\label{2Aj}
\bigl| \mathcal{A}^{(j)}_{\lambda}(g_1,\dots,g_j)(x)\bigr|\leq
\mathcal{A}^{(j)}_{\lambda}(g_j)(x).
\ee

Fix $1\leq j\leq k$. It is easy to see, using Minkowski's inequality, that for any Schwartz function $g$ we have the crude estimate
\be\label{trivial}
\int \bigl| \mathcal{A}^{(j)}_{\lambda}(g)(x)\bigr|^2\,dx\leq 
\int |g(x)|^2\,dx.
\ee

However, arguing more carefully one can just as easily obtain, using Plancherel's identity, the estimate 
\be\label{transformside}
\int \bigl| \mathcal{A}^{(j)}_{\lambda}(g)(x)\bigr|^2\,dx\leq
\int\cdots\int\Bigl(
\int |\widehat{g}(\xi)|^2\bigl|\widehat{d\sigma^{(d-j)}_{y_1,\dots,y_{j-1}}}(\lm\,\xi)\bigr|^2\,d\xi
 \Bigr)\,
d\sigma^{(d-j+1)}_{y_1,\dots,y_{j-2}}(y_{j-1})\cdots d\sigma^{(d-1)}(y_{1}),
\ee
where as usual 
\be
\widehat{d\nu}(\xi)=\int_{\R^d}e^{-2\pi i x\cdot\xi}\,d\nu(x)\ee
denotes the Fourier transform of any complex-valued Borel measure $d\nu$.
In light of (\ref{transformside}) it will come as little surprise that is the course of our arguments we will have use for the basic estimate
\be\label{decay}
\bigl|\widehat{d\sigma^{(d-j)}_{y_1,\dots,y_{j-1}}}(\xi)\bigr|+\bigl|\nabla\widehat{d\sigma^{(d-j)}_{y_1,\dots,y_{j-1}}}(\xi)\bigr|\leq C_\Delta\bigl(1+\text{dist}(\xi,\text{span}\{y_1,\dots,y_{j-1}\})\bigr)^{-(d-j)/2},
\ee
which is a consequence of the well-known asymptotic behavior of the Fourier transform of the measure on the unit sphere $S^{d-j}\subseteq\R^{d-j+1}$ induced by Lebesgue measure, see for example \cite{Stein}.


\subsection{A smooth cutoff function $\psi$ and some basic properties
}\label{properties}

Let
 $\psi:\R^d\rightarrow(0,\infty)$ be a Schwartz function that satisfies
\[1=\widehat{\psi}(0)\geq\widehat{\psi}(\xi)\geq0\quad\quad\text{and}\quad\quad \widehat{\psi}(\xi)=0 \ \ \text{for} \ \ |\xi|>1.\]
As usual, for any given $t>0$, we define
\be
\psi_{t}(x)=t^{-d}\psi(t^{-1}x).\ee

First we record the trivial observation that
\[\int\psi_{t}(x)\,dx=\int\psi(x)\,dx=\widehat{\psi}(0)=1\]
as well as the simple, but important, observation that $\psi$ may be chosen so that 
\be\label{cutoff2}
\bigl| 1-\widehat{\psi}_{t}(\xi)\bigr|=\bigl| 1-\widehat{\psi}(t\xi)\bigr|\ll\min\{1,t|\xi|\}.
\ee

Finally we record a formulation, appropriate to our needs, of the fact that for any given small parameter $\eta$, our cutoff function $\psi_{t}(x)$ will essentially supported where $|x|\leq\eta^{-1}t$ and is approximately constant on smaller scales. More precisely,

\begin{lem} Let $\eta>0$ and $t>0$, then 
\be\label{5.3}
\int_{|x|\geq\eta^{-1}t}\psi_{t}(x)\,dx \ll \eta.
\ee
and 
\be\label{5.1}
\int\int\bigl|\psi_{t}(x-\lm y)-\psi_{t}(x)\bigr|\,d\sigma^{(d-j)}_{y_1,\dots,y_{j-1}}(y)\,dx \ll \eta
\ee
for any $1\leq j\leq k$ and any choice of frame $y_1,\dots,y_{j-1}$ provided  $t\geq \eta^{-1}\lm$.
\end{lem}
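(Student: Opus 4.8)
The plan is to prove the two estimates \eqref{5.3} and \eqref{5.1} separately, both by straightforward size estimates using only the Schwartz decay of $\psi$ and the fact that the measures $d\sigma^{(d-j)}_{y_1,\dots,y_{j-1}}$ are probability measures.

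For \eqref{5.3}, I would first rescale: substituting $x=tz$ gives $\int_{|x|\geq\eta^{-1}t}\psi_t(x)\,dx=\int_{|z|\geq\eta^{-1}}\psi(z)\,dz$. Since $\psi$ is Schwartz, $|\psi(z)|\ll_N(1+|z|)^{-N}$ for every $N$; choosing $N=d+1$ and integrating over the complement of the ball of radius $\eta^{-1}$ in $\R^d$ yields $\int_{|z|\geq\eta^{-1}}\psi(z)\,dz\ll\int_{\eta^{-1}}^\infty r^{-(d+1)}r^{d-1}\,dr=\int_{\eta^{-1}}^\infty r^{-2}\,dr=\eta$, which is the claimed bound. (Of course the implied constant absorbs the Schwartz seminorm of $\psi$, which is absolute.)

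For \eqref{5.1}, the key point is that the shift vector $\lm y$ has length $|\lm y|\ll_\Delta\lm$ (since $y$ lies on a sphere whose radius is comparable to a distance between vertices of $\Delta$, hence $\ll_\Delta 1$), while the scale $t$ satisfies $t\geq\eta^{-1}\lm$, so $|\lm y|\lll\eta\, t$ is tiny compared to $t$. I would use the mean value theorem in the form $|\psi_t(x-\lm y)-\psi_t(x)|\leq|\lm y|\sup_{0\leq s\leq1}|\nabla\psi_t(x-s\lm y)|$, and note $\nabla\psi_t(w)=t^{-d-1}(\nabla\psi)(t^{-1}w)$. Integrating in $x$, pulling the $x$-integral inside (the measure $d\sigma$ is finite and the integrand nonnegative, so Tonelli applies), and substituting $x=tz$ gives, uniformly in the frame and in $y$,
\[
\int\bigl|\psi_t(x-\lm y)-\psi_t(x)\bigr|\,dx\ll\frac{|\lm y|}{t}\int_{\R^d}|\nabla\psi(z)|\,dz\ll_\Delta\frac{\lm}{t}\leq\eta,
\]
using $t\geq\eta^{-1}\lm$ and the integrability of $\nabla\psi$. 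Since $d\sigma^{(d-j)}_{y_1,\dots,y_{j-1}}$ is a probability measure, integrating this bound over $y$ costs nothing, and \eqref{5.1} follows.

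I do not expect any serious obstacle here; the only mild subtlety is bookkeeping the dependence of constants. One should verify that the radii of the spheres $S^{d-j}_{x_1,\dots,x_{j-1}}$ are bounded above by a constant depending only on $\Delta$ (they are, being at most $|v_j|\ll_\Delta 1$), so that $|y|\ll_\Delta 1$ on the support of $d\sigma^{(d-j)}_{y_1,\dots,y_{j-1}}$; this is what licenses writing $|\lm y|\ll_\Delta\lm$. Everything else is a one-line rescaling plus the mean value theorem, and the hypothesis $t\geq\eta^{-1}\lm$ is used precisely to convert $\lm/t$ into $\eta$.
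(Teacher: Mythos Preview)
Your proposal is correct and follows essentially the same approach as the paper: rescale to remove $t$, use Schwartz decay $\ll(1+|x|)^{-d-1}$ for \eqref{5.3}, and the mean value theorem plus rapid decay of $\nabla\psi$ for \eqref{5.1}, converting $\lm/t$ to $\eta$ via the hypothesis $t\geq\eta^{-1}\lm$. The only cosmetic imprecision is that after the mean value theorem and the substitution $x=tz$ you are left with $\int_{\R^d}\sup_{0\leq s\leq 1}\bigl|(\nabla\psi)(z-s\lm y/t)\bigr|\,dz$ rather than $\int_{\R^d}|\nabla\psi(z)|\,dz$; but since $|\lm y/t|\ll_\Delta 1$ and $\nabla\psi$ is Schwartz, this integral is still an absolute constant, so the conclusion stands.
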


\begin{proof} 
Estimate (\ref{5.3}) is easily verified using the fact that $\psi$ is a Schwartz function on $\R^d$ as
\[\int_{|x|\geq\eta^{-1}t}\psi_{t}(x)\,dx= \int_{|x|\geq\eta^{-1}}\psi(x)\,dx
\ll \int_{|x|\geq\eta^{-1}}(1+|x|)^{-d-1}\,dx\ll\, \eta.
\]

To verify estimate (\ref{5.1}) we make use of the fact that both
 $\psi$ and its derivative are rapidly decreasing, specifically
\begin{align*}
\int\int\bigl|\psi_{t}(x-\lm y)-\psi_{t}(x)\bigr|\,d\sigma^{(d-j)}_{y_1,\dots,y_{j-1}}(y)\,dx
&\leq \int\int\bigl|\psi(x-\lm y/t)-\psi(x)\bigr|\,d\sigma^{(d-j)}_{y_1,\dots,y_{j-1}}(y)\,dx\\
&
\ll \frac{\lm}{t} \int(1+|x|)^{-d-1}\,dx\ll \frac{\lm}{t}\end{align*}
for any choice of frame $y_1,\dots,y_{j-1}$.\end{proof}


\section{Proof of Proposition \ref{Thm1dic}}\label{proof1}

Let $f=1_A$ and $\D=|A|/N^d$.
Suppose that $1\leq\lm\leq \eta^{4} N$ and that (i) does not hold, then
\begin{equation}\label{not1U}
\langle f,\mathcal{A}^{(k)}_{\lambda}(f,\dots,f)\rangle\leq\langle f,\D^k-\VE\rangle=(\D^k-\VE)|A|.
\end{equation}

If we let $f_1:=f*\psi_{\eta^{-1}\lm}$, then by (\ref{jtoj-1}) and  (\ref{5.1}) it follows that for all $x\in\R^d$ and $1\leq j\leq k$ we have
\be
\Bigl|  \mathcal{A}^{(j)}_{\lambda}(f,\dots,f,f_1)(x)-f_1(x)\,\mathcal{A}^{(j-1)}_{\lambda}(f,\dots,f)(x)      \Bigr|\ll \eta
\ee
and
consequently
\be
f_1(x)^k+\sum_{j=1}^k f_1(x)^{k-j}\mathcal{A}^{(j)}_{\lambda}(f,\dots,f,f-f_1)(x)\ll\mathcal{A}^{(k)}_{\lambda}(f,\dots,f)(x)+\eta.
\ee
Together  with (\ref{not1U}) this gives
\be\label{allj}
\sum_{j=1}^k \bigl\langle ff_1^{k-j},\mathcal{A}^{(j)}_{\lambda}(f,\dots,f,f-f_1)\bigr\rangle
\leq 
\langle f,\D^k-f_1^k-\VE/2\rangle
\ee
provided $\eta\lll\VE$. We will now combine this with the following result, which we isolate as a lemma.

\begin{lem}\label{f1k}
Let $0<\eta\ll\D$ and $f_1:=f*\psi_{\eta^{-1}\lm}$, then
\be
\langle f,\D^k-f_1^k\rangle\ll \eta|A|
\ee
\end{lem}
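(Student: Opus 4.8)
\textbf{Proof plan for Lemma \ref{f1k}.}
The plan is to expand the difference $\D^k - f_1^k$ using the algebraic factorization $a^k - b^k = (a-b)\sum_{i=0}^{k-1} a^{k-1-i} b^i$, with $a = \D$ and $b = f_1(x)$. Since $0 \leq f_1 \leq 1$ pointwise (as $f_1 = f * \psi_{\eta^{-1}\lm}$ is an average of the $[0,1]$-valued function $f = 1_A$ against a probability density) and $0 \leq \D \leq 1$, each of the $k$ terms in the sum is bounded by $1$, so $|\D^k - f_1^k| \leq k\,|\D - f_1(x)|$. Therefore it suffices to show $\langle f, |\D - f_1|\rangle \ll \eta |A|$, or even just $\langle f, \D - f_1\rangle$ together with control on the part where $f_1 > \D$; in fact the cleanest route is to bound $\int_{\R^d} |f_1(x) - \D \cdot 1_{B_N}(x)|\,dx$ type quantities. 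More precisely I would write $\langle f, \D^k - f_1^k \rangle \ll k \langle f, |\D - f_1|\rangle \leq k \int_{\R^d} f(x)\,|\D - f_1(x)|\,dx$.

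The main point is then to estimate $\int f \cdot |\D - f_1|$. Here I would use that $f_1 = f * \psi_{\eta^{-1}\lm}$ has total mass $\int f_1 = \int f = |A| = \D N^d$ (since $\psi_{\eta^{-1}\lm}$ integrates to $1$), and that $A \subseteq B_N$, so $f$ is supported in $B_N$ while $f_1$ is supported within $O(\eta^{-1}\lm)$ of $B_N$; since $\lm \leq \eta^4 N$, the enlarged cube has volume $(1 + O(\eta^3))^d N^d = (1 + O(\eta^3)) N^d$. The comparison $\langle f, f_1\rangle$ versus $\langle f, \D\rangle = \D|A|$ is exactly the kind of "almost orthogonality to constants" statement one expects: one shows $\langle f, f_1 \rangle = \langle \widehat{f}, \widehat{f}\,\widehat{\psi}(\eta^{-1}\lm \cdot)\rangle$ is close to $\D |A|$ by isolating the $\xi = 0$ Fourier mode, but it is cleaner to argue in physical space. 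I would instead bound directly: $\big|\langle f, f_1\rangle - \D|A|\big| = \big|\langle f, f_1 - \D 1_{B_N}\rangle + \langle f, \D 1_{B_N} - \D\rangle\big|$, and the second term vanishes since $f$ is supported in $B_N$. For the first term, write $f_1 - \D 1_{B_N} = (f - \D 1_{B_N}) * \psi_{\eta^{-1}\lm} + \D\big(1_{B_N} * \psi_{\eta^{-1}\lm} - 1_{B_N}\big)$; the second summand is supported in an $O(\eta^{-1}\lm)$-neighborhood of $\partial B_N$ and is $O(1)$ there, contributing $O(\eta^{-1}\lm N^{d-1}) = O(\eta^3 N^d) \ll \eta |A|$ after multiplying by $f \leq 1$ (using $\lm \leq \eta^4 N$ and $|A| = \D N^d$ with $\eta \ll \D$). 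The first summand, paired against $f$, is $\langle f, (f - \D 1_{B_N})*\psi_{\eta^{-1}\lm}\rangle = \langle f * \psi_{\eta^{-1}\lm}, f - \D 1_{B_N}\rangle = \langle f_1 - \D 1_{B_N} + \D 1_{B_N}, f - \D 1_{B_N}\rangle$; since $\int_{B_N}(f - \D 1_{B_N}) = |A| - \D N^d = 0$, the $\D 1_{B_N}$ part drops, leaving a quadratic expression one closes by Plancherel, noting $|\widehat{f - \D 1_{B_N}}(\xi)|^2 |\widehat{\psi}(\eta^{-1}\lm\xi)|^2$ is supported in $|\xi| \leq \eta \lm^{-1}$ — wait, this reintroduces a Fourier argument.

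Let me simplify the execution: the honest and shortest path is purely physical-space. We have
\[
\langle f, \D^k - f_1^k\rangle \ll k\,\langle f, |\D - f_1|\rangle,
\]
and
\[
\langle f, |\D - f_1|\rangle \leq \Big(\int_{B_N} |\D - f_1(x)|^2\,dx\Big)^{1/2} |A|^{1/2}
\]
by Cauchy--Schwarz, so it suffices to show $\int_{B_N}|\D - f_1|^2 \ll \eta^2 \D N^d$. Expanding, $\int_{B_N}|\D - f_1|^2 = \D^2 N^d - 2\D \int_{B_N} f_1 + \int_{B_N} f_1^2$. Since $\int_{\R^d} f_1 = |A| = \D N^d$ and $f_1 \geq 0$ is supported within $O(\eta^{-1}\lm)$ of $B_N$, we get $\int_{B_N} f_1 = \D N^d - O(\eta^{-1}\lm N^{d-1}\,\|\psi\|_\infty\text{-tail}) = \D N^d(1 + O(\eta^3))$ using \eqref{5.3}; and $\int_{B_N} f_1^2 \leq \int_{\R^d} f_1^2 = \langle f, f_1\rangle$ (since $f_1 = f * \psi_{\eta^{-1}\lm}$ and $\widehat{\psi} \geq 0$ gives $\int f_1^2 = \int |\widehat f|^2 \widehat\psi^2 \leq \int |\widehat f|^2 \widehat \psi = \langle f, f_1\rangle \leq \langle f, 1\rangle = |A| = \D N^d$). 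Combining, $\int_{B_N}|\D - f_1|^2 \leq \D^2 N^d - 2\D^2 N^d(1 - O(\eta^3)) + \D N^d = \D N^d(1 - \D) + O(\eta^3 \D N^d) \ll \D N^d$, which is only $O(\D N^d)$, not $O(\eta^2 \D N^d)$ — so plain Cauchy--Schwarz is too lossy and one genuinely needs the cancellation, i.e. the Fourier/Plancherel argument isolating that $\widehat f_1 - \D\,\widehat{1_{B_N}}$ concentrates where $|\xi|\lesssim \eta\lm^{-1}$ is small.

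\textbf{The main obstacle}, then, is precisely this cancellation: one cannot avoid using that $\widehat{\psi}(\eta^{-1}\lm\xi)$ is supported in $|\xi| \leq \eta\lm^{-1}$, so that $\widehat{f_1}(\xi) = \widehat f(\xi)\widehat\psi(\eta^{-1}\lm\xi)$ only sees very low frequencies, where $\widehat f(\xi)$ is close to $\widehat f(0) = |A|$. The clean execution is: write $\langle f, \D - f_1\rangle$ and $\langle f, (\D - f_1)^{\text{sign adjustments}}\rangle$ in terms of $\widehat f$, use $|\widehat f(\xi) - |A|\,\widehat{(N^{-d}1_{B_N})}(\xi)|$-type bounds, or most simply bound $\langle f, |\D - f_1| \rangle$ by splitting $\R^d$ into $|\xi| \leq \eta \lm^{-1}$ region handled by \eqref{cutoff2} (giving $|1 - \widehat\psi(\eta^{-1}\lm\xi)| \ll \eta^{-1}\lm|\xi|$, hence a factor $\ll \eta$ on the relevant annulus $\eta\lm^{-1} \lesssim$ something... ). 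Given the role this lemma plays in \eqref{allj}, the intended proof is surely the short Plancherel computation: $\langle f, \D^k - f_1^k\rangle \ll \langle f, \D - f_1\rangle$ plus a one-sided bound, and $\langle f, \D - f_1 \rangle$ expanded via Plancherel against $1 - \widehat\psi(\eta^{-1}\lm\xi)$, combined with $\lm \leq \eta^4 N$ and $A \subseteq B_N$ to absorb boundary terms — with the only real work being the bookkeeping of the $\xi = 0$ mode and the boundary of $B_N$.
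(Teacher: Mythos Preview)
Your central reduction is a genuine error, not just a matter of execution. You bound
$|\D^k - f_1(x)^k|\leq k\,|\D - f_1(x)|$ and then try to control
$\langle f,\,|\D - f_1|\rangle$. But this last quantity is \emph{not} small in general: take $A$ to be a half of the cube $B_N$, so that $\D=1/2$ and $f_1\approx 1$ on almost all of $A$; then $\langle f,\,|\D-f_1|\rangle\approx \tfrac{1}{2}|A|$, not $\eta|A|$. Your own Cauchy--Schwarz computation already detected this (you found $\int_{B_N}|\D-f_1|^2\approx \D(1-\D)N^d$), but you misread it as a loss in Cauchy--Schwarz rather than as evidence that the absolute-value quantity is genuinely large. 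Your final suggestion of a Plancherel argument cannot rescue this: there is no Fourier cancellation to exploit, because the quantity you are trying to bound is simply not $O(\eta|A|)$. The signed reduction you hint at, ``$\langle f,\D^k-f_1^k\rangle\ll\langle f,\D-f_1\rangle$ plus a one-sided bound'', also fails: by the mean value theorem $\D^k-f_1^k=k\,c^{k-1}(\D-f_1)$ for some $c$ between $\D$ and $f_1$, and when $\D-f_1<0$ the factor $c^{k-1}$ can be much smaller than $1$, so the signed inequality $\D^k-f_1^k\leq k(\D-f_1)$ is false pointwise.

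The lemma is a \emph{one-sided} (upper) bound on $\langle f,\D^k-f_1^k\rangle$, equivalently a \emph{lower} bound on $\int f\,f_1^k$, and the correct mechanism is convexity, not cancellation. The paper first treats $k=1$ by the chain
\[
\int f\,f_1 \;=\;\int|\widehat f|^2\,\widehat\psi(\eta^{-1}\lm\,\cdot)\;\geq\;\int|\widehat f|^2\,\widehat\psi(\eta^{-1}\lm\,\cdot)^2\;=\;\int f_1^2\;\geq\;\frac{1}{|B_N|}\Bigl(\int_{B_N}f_1\Bigr)^2,
\]
using $0\leq\widehat\psi\leq 1$ and Cauchy--Schwarz on $B_N$; then the tail/boundary argument you sketched (essentially the one in your second paragraph, using $\lm\leq\eta^4N$ and (\ref{5.3})) gives $\int_{B_N}f_1\geq(1-C\eta)|A|$. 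The passage from $k=1$ to general $k$ is H\"older:
\[
\Bigl(\int f\,f_1\Bigr)^k\leq |A|^{k-1}\int f\,f_1^k,
\]
which yields $\int f\,f_1^k\geq \D^k(1-C\eta)^k|A|$. No Fourier ``$\xi=0$ mode'' isolation is needed beyond the single use of $0\leq\widehat\psi\leq 1$.
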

Combining Lemma \ref{f1k} with (\ref{allj}) we see that if $\eta\lll\VE$ and (\ref{not1U}) holds, 
then there exist $1\leq j\leq k$ such that 
\be
\Bigl| \bigl\langle ff_1^{k-j},\mathcal{A}^{(j)}_{\lambda}(f,\dots,f,f-f_1)\bigr\rangle  \Bigr|
\gg \VE |A|
\ee
and hence, using (\ref{2Aj}) and the fact that $0\leq f_1\leq 1$, that
\be\label{MTU}
\bigl\langle f,\mathcal{A}^{(j)}_{\lambda}(f-f_1)\bigr\rangle \gg \VE |A|.
\ee

The final ingredient in the proof of Proposition \ref{Thm1dic} is the following


\begin{lem}[Error term]\label{ETU}
If $f_2:=f*\psi_{\eta^2\lm}$, 
then for any $1\leq j\leq k$ we have the estimate
\be
\bigl\langle f,\mathcal{A}^{(j)}_{\lambda}(f-f_2)\bigr\rangle
\ll \eta^{2/5}|A|.\ee
\end{lem}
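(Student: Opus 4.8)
\textbf{Proof proposal for Lemma \ref{ETU}.}

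The plan is to estimate $\bigl\langle f,\mathcal{A}^{(j)}_{\lambda}(f-f_2)\bigr\rangle$ by Cauchy--Schwarz followed by a Fourier-side analysis of the $L^2$ norm of $\mathcal{A}^{(j)}_{\lambda}(f-f_2)$. Since $f-f_2 = f - f*\psi_{\eta^2\lm}$, its Fourier transform is $\widehat{f}(\xi)\bigl(1-\widehat{\psi}(\eta^2\lm\,\xi)\bigr)$, which by \eqref{cutoff2} is $\ll \widehat{f}(\xi)\min\{1,\eta^2\lm|\xi|\}$; in particular $\widehat{f-f_2}$ is essentially supported in the region $|\xi|\gtrsim \eta^{-2}\lm^{-1}$, i.e.\ \emph{outside} the annulus $\Omega_\lambda$. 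Applying Cauchy--Schwarz, $\bigl\langle f,\mathcal{A}^{(j)}_{\lambda}(f-f_2)\bigr\rangle \leq |A|^{1/2}\,\|\mathcal{A}^{(j)}_{\lambda}(f-f_2)\|_2$, so it suffices to show $\|\mathcal{A}^{(j)}_{\lambda}(f-f_2)\|_2^2 \ll \eta^{4/5}|A|$.

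For the $L^2$ bound I would invoke estimate \eqref{transformside}, which gives
\[
\int \bigl| \mathcal{A}^{(j)}_{\lambda}(f-f_2)(x)\bigr|^2\,dx\leq
\int\cdots\int\Bigl(
\int |\widehat{f-f_2}(\xi)|^2\bigl|\widehat{d\sigma^{(d-j)}_{y_1,\dots,y_{j-1}}}(\lm\,\xi)\bigr|^2\,d\xi
 \Bigr)\,
d\sigma^{(d-j+1)}_{y_1,\dots,y_{j-2}}(y_{j-1})\cdots d\sigma^{(d-1)}(y_{1}).
\]
The inner $\xi$-integral I would split according to the size of $\lm|\xi|$ and the distance of $\xi$ to $\mathrm{span}\{y_1,\dots,y_{j-1}\}$. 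On the region where $\eta^2\lm|\xi|\leq \eta^{8/5}$ (equivalently $|\xi|\leq \eta^{-2/5}\lm^{-1}$), the factor $|1-\widehat{\psi}(\eta^2\lm\xi)|^2 \ll (\eta^2\lm|\xi|)^2 \ll \eta^{16/5}$ is tiny, while the spherical measures are bounded by $1$, so this part contributes $\ll \eta^{16/5}\|\widehat{f}\|_2^2 = \eta^{16/5}|A|$, which is far better than needed. On the complementary region $|\xi|\geq \eta^{-2/5}\lm^{-1}$, I would use the decay estimate \eqref{decay}: there $\lm|\xi|\geq \eta^{-2/5}$, so that $|\widehat{d\sigma^{(d-j)}_{y_1,\dots,y_{j-1}}}(\lm\xi)|^2 \ll (1+\mathrm{dist}(\lm\xi,\mathrm{span}\{y_1,\dots,y_{j-1}\}))^{-(d-j)}$. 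After integrating the $y_i$'s over the spheres, the averaging in the frame directions spreads $\lm\xi$ away from any fixed $(j-1)$-plane, and one gains a factor that is a negative power of $\eta^{-2/5}$; tracking constants carefully (using $d\geq k+1\geq j+1$ so $(d-j)/2\geq 1/2$) should yield a bound of the form $\ll \eta^{2/5}\|\widehat{f}\|_2^2 = \eta^{2/5}|A|$ for this piece, which combined with Cauchy--Schwarz gives the claimed $\eta^{2/5}|A|$ after taking square roots — so I should aim for $\eta^{4/5}|A|$ at the $L^2$-squared level.

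The main obstacle I anticipate is the frame-averaging argument on the high-frequency region: one must show that averaging $|\widehat{d\sigma^{(d-j)}_{y_1,\dots,y_{j-1}}}(\lm\xi)|^2$ over the iterated spheres $S^{d-1},\,S^{d-2}_{y_1},\dots$ genuinely produces decay in $\lm|\xi|$ uniformly in the direction of $\xi$, and not merely in $\mathrm{dist}(\xi,\mathrm{span}\{y_1,\dots,y_{j-1}\})$ for a fixed frame. The point is that for $\lm|\xi|$ large, the set of frames for which $\lm\xi$ lies within distance $O(1)$ of $\mathrm{span}\{y_1,\dots,y_{j-1}\}$ has small measure with respect to the product of spherical measures — quantitatively, a power of $(\lm|\xi|)^{-1}$ — because requiring $\lm\xi$ to be nearly in the span of $j-1$ vectors on spheres of fixed radii is a codimension-$(d-j)$ condition (here the hypothesis $d\geq k+1$ is what guarantees $d-j\geq 1$). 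I would make this precise by inducting on $j$, peeling off one spherical integration at a time and using the stationary-phase/decay bound \eqref{decay} together with the observation that for a unit vector on a sphere, the measure of the set making small angle with a fixed hyperplane is controlled by that angle. Once this uniform decay is established the rest is bookkeeping with the exponent $2/5$ (which is evidently not optimal but chosen for convenience, since $\eta \lll \VE^{5/2}$ forces $\eta^{2/5}\lll\VE$).
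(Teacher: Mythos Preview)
Your approach is essentially the paper's: Cauchy--Schwarz plus Plancherel reduce matters to bounding $\int |\widehat{f}(\xi)|^2\,|1-\widehat{\psi}(\eta^2\lm\xi)|^2\,I(\lm\xi)\,d\xi$, where $I(\xi)$ is exactly your frame-averaged quantity, and the key input is the bound $I(\xi)\le C_\Delta(1+|\xi|)^{-(d-j)/2}$---your ``main obstacle'', which the paper also only sketches, calling it a ``conical decomposition'' of the configuration space. One correction to your bookkeeping: the threshold $\lm|\xi|=\eta^{-2/5}$ is not the right balance point and does not yield $\eta^{4/5}$ at the $L^2$-squared level; the paper instead uses the pointwise inequality $\min\{(\lm|\xi|)^{-1/2},\,\eta^4(\lm|\xi|)^2\}\le \eta^{4/5}$ (the two bounds meet at $\lm|\xi|=\eta^{-8/5}$), which avoids the splitting altogether.
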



Indeed, since
\bee
 \bigl\langle f,\mathcal{A}^{(j)}_{\lambda}(f_2-f_1)\bigr\rangle 
\geq
 \bigl\langle f,\mathcal{A}^{(j)}_{\lambda}(f-f_1)\bigr\rangle  
-
 \bigl\langle f,\mathcal{A}^{(j)}_{\lambda}(f-f_2)\bigr\rangle   
\eee
we see that (\ref{MTU}) together with Lemma \ref{ETU} will imply that if $\eta\lll\VE^{5/2}$ and (\ref{not1U}) holds, then there exist $1\leq j\leq k$ such that
\be
 \bigl\langle f,\mathcal{A}^{(j)}_{\lambda}(f_2-f_1)\bigr\rangle  \gg \VE |A|.
\ee
It then follows, via Cauchy-Schwarz and Plancherel, that 
\be
\int \bigl|\widehat{f}(\xi)\bigr|^2\bigl| \widehat{\psi}_{\eta^2\lm}(\xi)-\widehat{\psi}_{\eta^{-1}\lm}(\xi)\bigr|^2\,d\xi\gg\VE^2\,|A|,
\ee
which is essentially the estimate that we are trying to prove and since (\ref{cutoff2}) implies that
\be\label{cutoff}
\bigl| \widehat{\psi}_{\eta^2\lm}(\xi)-\widehat{\psi}_{\eta^{-1}\lm}(\xi)\bigr|\ll\eta\ee
whenever $\xi\notin\Omega_{\lm}$, it indeed sufficies and concludes the proof of Proposition \ref{Thm1dic}.\qed




\subsection{Proof of Lemma \ref{f1k}}
It suffices to establish the result when $k=1$, specifically that
\be\label{k=1}
\int f(x)f_1(x)\,dx\geq \D(1-C\eta)\,|A|
\ee
for some constant $C>0$, since from H\"older's inequality we would then obtain
\[\D^k(1-C\eta)^k\,|A|^k\leq\Bigl(\int f(x)f_1(x)\,dx\Bigr)^k\leq|A|^{k-1}\int f(x)f_1(x)^k\,dx
\]
from which the full result immediately follows since $0<\eta\lll1$. Towards establishing (\ref{k=1}) we note that using Parserval and the fact that $0\leq\widehat{\psi}\leq 1$ we have
\be
\int f(x)f_1(x)\,dx=\int |\widehat{f}(\xi)|^2\widehat{\psi}(\eta^{-1}\lm\,\xi)\,d\xi\geq \int |\widehat{f}(\xi)|^2|\widehat{\psi}(\eta^{-1}\lm\,\xi)|^2\,d\xi=\int f_1(x)^2\,dx
\ee
and as such we need only show that
\be\label{finally}
\int f_1(x)^2\,dx\geq \D(1-C\eta)\,|A|
\ee
for some constant $C>0$.
Since an application of Cauchy-Schwarz gives that
\be
\int_{B_N} f_1(x)^2\,dx\geq \frac{1}{|B_N|}\Bigl(\int_{B_N} f_1(x)\,dx\Bigr)^2
\ee
our task is further reduces to simply showing that
\be\label{w}
\int_{B_N} f_1(x)\,dx\geq (1-C\eta)|A|
\ee
for some constant $C>0$.
To establish (\ref{w}) we now let $N'=N+\eta^{-2} \lm$ and write 
\be\int_{\R^d} f_1(x)\,dx=\int_{B_N} f_1(x)\,dx+\int_{\R^d\setminus B_{N'}} f_1(x)\,dx+\int_{B_{N'}\setminus B_N} f_1(x)\,dx.\ee
The fact that $\lm\leq\eta^{4}N$ ensures that
 \be\frac{|B_{N'}\backslash B_N|}{|B_N|}\ll\left(\frac{N'}{N}-1\right)\ll \eta^{-2}\frac{\lm}{N}\ll\eta^{2}\ee
and hence, since $\eta\lll\D$ and $0\leq f_1\leq 1$, that
\[\int_{B_{N'}\setminus B_N} f_1(x)\,dx\ll\eta^2|B_N|\leq \eta|A|,\]
while  (\ref{5.3}) ensures that
\be\int_{\R^d\setminus B_{N'}} f_1(x)\,dx \leq |A|\int_{|y|\gg \eta^{-2}\lm} \psi_{\eta^{-1}\lm}(y)\,dy \ll \eta |A|.\ee
Since
\[\int_{\R^d} f_1(x)=\int_{\R^d} f(x)=|A|\]
estimate (\ref{w}) follows.\qed

\subsection{Proof of Lemma \ref{ETU}}

It follows from an application of Cauchy-Schwarz and Plancherel that
\[ \bigl\langle f,\mathcal{A}^{(j)}_{\lambda}(f-f_2)\bigr\rangle^2 \leq 
|A|\cdot\int|\widehat{f}(\xi)|^2|1-\widehat{\psi}(\eta^2\lm\,\xi)|^2I(\lm\,\xi)\,d\xi\]
where
\be\label{I}
I(\xi)=\int\cdots\int
\bigl|\widehat{d\sigma^{(d-j)}_{y_1,\dots,y_{j-1}}}(\xi)\bigr|^2\,d\sigma^{(d-j+1)}_{y_1,\dots,y_{j-2}}(y_{j-1})\cdots d\sigma^{(d-1)}(y_{1}).
\ee

While from (\ref{decay}), the trivial uniform bound $I(\xi)\ll 1$, and an appropriate ``conical" decomposition, depending on $\xi$, of the configuration space over which the integral $I(\xi)$ is defined, we have
\be\label{Ibound}
I(\xi)\leq C_\Delta (1+|\xi|)^{-(d-j)/2}.
\ee

Combining this observation with (\ref{cutoff2}) we obtain the uniform bound
\be
|1-\widehat{\psi}(\eta^2\lm\,\xi)|^2I(\lm\,\xi)\ll\min\{(\lm|\xi|)^{-1/2},\eta^4\lm^2|\xi|^2\}\leq\eta^{4/5}
\ee
which, after an application of Plancherel, completes the proof.
\qed





\section{Proof of Proposition \ref{Thm2dic}}\label{proof2}

Suppose that we have a pair
$(\lm_0,\lm_1)$ satisfying $1\leq\lambda_0\leq\lm_1\leq \eta^{4} N$, but for which (i) does not hold. It follows that for all $x\in A$ there must exist $\lm_0\leq\lm\leq\lm_1$ such that
\begin{equation}\label{not1Uagain}
\mathcal{A}^{(k)}_{\lambda}(f,\dots,f)(x)\leq\D^k-\VE.
\end{equation}

We now let $f_1=f*\psi_{\eta^{-1}\lm_1}$, noting the slight difference from the definition of $f_1$ given in the proof of Proposition \ref{Thm1dic}. It follows from (\ref{not1Uagain}), as in the proof of Proposition \ref{Thm1dic}, that for all $x\in A$ there must exist $\lm_0\leq\lm\leq\lm_1$ such that
\begin{equation}
\sum_{j=1}^k f_1(x)^{k-j}\mathcal{A}^{(j)}_{\lambda}(f,\dots,f,f-f_1)(x)
\leq  \D^k-f_1(x)^k-\VE/2
\end{equation}
provided $\eta\lll\VE$, and hence that 
\begin{equation}
\sum_{j=1}^k \mathcal{A}^{(j)}_{\,*}(f-f_1)(x)
\geq  f_1(x)^k-\D^k+\VE/2
\end{equation}
for all $x\in A$, where for any Schwartz function $g$, $\mathcal{A}_{\,*}^{(j)}(g)$ denotes the \emph{maximal average} defined by
\be
\mathcal{A}_{\,*}^{(j)}(g)(x):=\sup_{\lm_0\leq\lm\leq\lm_1}\mathcal{A}^{(j)}_{\lambda}(g) (x).\ee

 Consequently, provided $\eta\ll\VE$ and appealing to Lemma \ref{f1k}, we may conclude that there must exist $1\leq j\leq k$ such that 
\be\label{not1P}
 \bigl\langle f,\mathcal{A}^{(j)}_{\,*}(f-f_1)\bigr\rangle  
\gg \VE |A|.
\ee

Arguing as in the proof of Proposition \ref{Thm1dic} we see that everything reduces to establishing the $L^2$-boundedness of $\mathcal{A}_{\,*}^{(j)}$ together with appropriate estimates for the ``mollified'' maximal operator
\be
\mathcal{M}_{\eta}^{(j)}(f):=\mathcal{A}_{\,*}^{(j)}(f-f_2)
\ee
where $f_2=f*\psi_{\eta^2\lm_0}$. 

Note that
\be
\mathcal{M}_{\eta}^{(j)}(f)=\sup_{\lm_0\leq\lm\leq\lm_1}\int\cdots\int\Bigl|  \int f(x-\lm y_j)\,d\mu^{(j)}_{\eta}(y_j) \Bigr|\,
d\sigma^{(d-j+1)}_{y_1,\dots,y_{j-2}}(y_{j-1})\cdots d\sigma^{(d-1)}(y_{1})\ee
where
\be
d\mu^{(j)}_{\eta}=d\sigma^{(d-j)}_{y_1,\dots,y_{j-1}}-\psi_{\eta^2\lm_0\lm^{-1}}*d\sigma^{(d-j)}_{y_1,\dots,y_{j-1}}.
\ee
and hence 
\be
\widehat{\mu^{(j)}_{\eta}}(\lm\,\xi)=\widehat{d\sigma^{(d-j)}_{y_1,\dots,y_{j-1}}}(\lm\,\xi)\,\bigl(
1-\widehat{\psi}(\eta^2\lm_0\,\xi)
\bigr).
\ee

The precise results that we need are recorded in the following two propositions.

\begin{propn}[$L^2$-Boundedness of the Maximal Averages  $\mathcal{A}_{\,*}^{(j)}$]\label{P1}
If $d\geq j+2$, then
\be
\int_{\R^d}|\mathcal{A}_{\,*}^{(j)}(g)(x)|^2\,dx\ll \int_{\R^d}|g(x)|^2\,dx.\ee
\end{propn}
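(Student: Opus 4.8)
The plan is to prove the $L^2$-boundedness of the maximal average $\mathcal{A}_{\,*}^{(j)}(g)(x)=\sup_{\lm_0\le\lm\le\lm_1}\mathcal{A}_\lm^{(j)}(g)(x)$ by a standard reduction to the spherical maximal function plus a fibering argument over the ``frames'' $y_1,\dots,y_{j-1}$. First I would recall the definition
\[
\mathcal{A}_{\lm}^{(j)}(g)(x)=\int\cdots\int\Bigl|\int g(x-\lm y_j)\,d\sigma^{(d-j)}_{y_1,\dots,y_{j-1}}(y_j)\Bigr|\,d\sigma^{(d-j+1)}_{y_1,\dots,y_{j-2}}(y_{j-1})\cdots d\sigma^{(d-1)}(y_1),
\]
and pull the supremum inside all the integrals: since the outer integrations are over probability measures and the integrand is nonnegative,
\[
\mathcal{A}_{\,*}^{(j)}(g)(x)\le\int\cdots\int\Bigl(\sup_{\lm_0\le\lm\le\lm_1}\Bigl|\int g(x-\lm y_j)\,d\sigma^{(d-j)}_{y_1,\dots,y_{j-1}}(y_j)\Bigr|\Bigr)\,d\sigma^{(d-j+1)}_{y_1,\dots,y_{j-2}}(y_{j-1})\cdots d\sigma^{(d-1)}(y_1).
\]
Now fix a frame $y_1,\dots,y_{j-1}$ with $y_i\cdot y_{i'}=v_i\cdot v_{i'}$. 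The inner object $\sup_{\lm}\bigl|\int g(x-\lm y_j)\,d\sigma^{(d-j)}_{y_1,\dots,y_{j-1}}(y_j)\bigr|$ is, after translating $x$ into the affine subspace $V$ spanned by $\{y_1,\dots,y_{j-1}\}$ and its orthogonal complement, exactly a spherical maximal average on the $(d-j+1)$-dimensional space orthogonal to $\mathrm{span}\{y_1,\dots,y_{j-1}\}$ (the sphere $S^{d-j}_{y_1,\dots,y_{j-1}}$ lives in an affine copy of $\R^{d-j+1}$). Since $d\ge j+2$ means $d-j+1\ge 3$, Stein's spherical maximal function theorem applies on $\R^{d-j+1}$: the dimension is $\ge 2$, in fact $\ge 3$, so $L^2$-boundedness holds comfortably.

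The key step is then to combine the fiberwise spherical maximal bound with the extra averaging in the remaining $j-1$ directions. Writing $\R^d = \R^{d-j+1}\times \R^{j-1}$ with the second factor containing $\mathrm{span}\{y_1,\dots,y_{j-1}\}$ (the splitting depends on the frame, but rotating by an element of $SO(d)$ is an $L^2$-isometry and does not affect constants), for fixed frame and fixed value $w$ of the $\R^{j-1}$-coordinate the function $x'\mapsto g(x',w)$ lives on $\R^{d-j+1}$, and
\[
\int_{\R^{d-j+1}}\Bigl(\sup_{\lm_0\le\lm\le\lm_1}\Bigl|\int g(x'-\lm y_j,w)\,d\sigma^{(d-j)}_{y_1,\dots,y_{j-1}}(y_j)\Bigr|\Bigr)^2\,dx'\ll\int_{\R^{d-j+1}}|g(x',w)|^2\,dx',
\]
with constant depending only on $d$. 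Integrating in $w$ gives, for each fixed frame, an $L^2(\R^d)$ bound on the inner maximal average by $\|g\|_{L^2(\R^d)}^2$. Then I would apply Minkowski's integral inequality in $L^2$ to the outer frame-averages: since those are probability measures,
\[
\Bigl\|\mathcal{A}_{\,*}^{(j)}(g)\Bigr\|_{L^2(\R^d)}\le\int\cdots\int\Bigl\|\sup_{\lm}\bigl|\textstyle\int g(\cdot-\lm y_j)\,d\sigma^{(d-j)}_{y_1,\dots,y_{j-1}}\bigr|\Bigr\|_{L^2(\R^d)}\,d\sigma^{(d-j+1)}\cdots d\sigma^{(d-1)}\ll\|g\|_{L^2(\R^d)},
\]
which is the claimed estimate.

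The main obstacle I anticipate is making the ``fiberwise spherical maximal function'' reduction genuinely rigorous and uniform in the frame. One must verify that $S^{d-j}_{y_1,\dots,y_{j-1}}$ is a genuine $(d-j)$-sphere of radius bounded below and above by constants depending only on $\Delta$ (this is where non-degeneracy of the simplex enters, ensuring $\mathrm{dist}(v_j,\mathrm{span}\{v_1,\dots,v_{j-1}\})>0$), that its center and radius depend continuously on the frame, and that after an $SO(d)$-rotation placing $\mathrm{span}\{y_1,\dots,y_{j-1}\}$ into a fixed coordinate subspace the measure $d\sigma^{(d-j)}_{y_1,\dots,y_{j-1}}$ becomes a fixed dilate/translate of the standard sphere measure on $S^{d-j}\subseteq\R^{d-j+1}$. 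Once this is set up, Stein's theorem (and, for $d-j+1=2$, Bourgain's circular maximal theorem, though here $d-j+1\ge3$ so Stein suffices) supplies a constant depending only on $d$, and the rest is routine Fubini/Minkowski. A cleaner alternative worth mentioning is to bound $\mathcal{A}_{\,*}^{(j)}$ pointwise by the full $d$-dimensional Stein maximal operator $\sup_{r>0}|g*d\sigma_r|$ averaged suitably — but the fibered version is what the dimension hypothesis $d\ge j+2$ is tailored to, so I would carry it out as above.
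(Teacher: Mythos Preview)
Your proposal is correct and takes essentially the same approach as the paper: pull the supremum inside the outer frame averages, fix a frame and slice $\R^d=\R^{j-1}\times\R^{d-j+1}$, then apply Stein's spherical maximal theorem on $L^2(\R^{d-j+1})$ (valid since $d-j+1\ge 3$), and finally average over frames. Your use of Minkowski's integral inequality in place of the paper's Cauchy--Schwarz/Jensen step is purely cosmetic, and your discussion of the geometric setup (non-degeneracy, uniform radius bounds, rotating the frame into a fixed coordinate subspace) just makes explicit what the paper leaves implicit.
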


\begin{propn}[$L^2$-decay of the ``Mollified" Maximal Averages  $\mathcal{M}_{\eta}^{(j)}$]\label{P2}
Let $\eta>0$. 
If $d\geq j+2$, then
\be
\int_{\R^d}|\mathcal{M}_{\eta}^{(j)}(f)(x)|^2\,dx\ll\eta^{2/3} \int_{\R^d}|f(x)|^2\,dx.
\ee
\end{propn}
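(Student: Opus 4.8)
The plan is to reduce the bound for $\mathcal{M}_{\eta}^{(j)}$ to the bound for $\mathcal{A}_{\,*}^{(j)}$ of Proposition \ref{P1} by factoring off a smooth Fourier multiplier that already carries a quantitative gain in $\eta$. First I would linearize the supremum: fix a measurable function $x\mapsto\lm(x)\in[\lm_0,\lm_1]$ and write $\mathcal{M}_{\eta}^{(j)}(f)(x)=\mathcal{A}^{(j)}_{\lm(x)}(f-f_2)(x)$, so it suffices to bound this with a constant independent of the choice of $\lm(\cdot)$. The point is that $f-f_2 = f*(\delta_0-\psi_{\eta^2\lm_0})$, whose Fourier transform is $\widehat{f}(\xi)\,(1-\widehat{\psi}(\eta^2\lm_0\,\xi))$. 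By (\ref{cutoff2}) we have $|1-\widehat{\psi}(\eta^2\lm_0\,\xi)|\ll\min\{1,\eta^2\lm_0|\xi|\}$, and on the complementary range, by the decay estimate (\ref{decay}) together with the conical decomposition already used in the proof of Lemma \ref{ETU} (giving the pointwise bound $|\widehat{d\sigma^{(d-j)}_{y_1,\dots,y_{j-1}}}(\lm\xi)|\ll(1+\lm|\xi|)^{-(d-j)/2}\leq(1+\lm|\xi|)^{-1}$ since $d\geq j+2$), the symbol of each individual average $\mathcal{A}^{(j)}_{\lm}$ decays like $(\lm|\xi|)^{-1}$. Combining, for every $\lm\in[\lm_0,\lm_1]$ the full symbol satisfies $|\widehat{d\sigma^{(d-j)}_{y_1,\dots,y_{j-1}}}(\lm\xi)(1-\widehat{\psi}(\eta^2\lm_0\xi))| \ll \min\{\eta^2\lm_0|\xi|,(\lm_0|\xi|)^{-1}\}\ll \eta^{2/3}$ uniformly, since the two bounds cross when $\lm_0|\xi|\sim\eta^{-2/3}$.

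The clean way to exploit this is to split $f-f_2$ into a low-frequency and a high-frequency piece relative to the threshold $|\xi|\sim(\eta^{2/3}\lm_0)^{-1}$. For the low piece $g_{\text{low}}$, where $|\xi|\leq(\eta^{2/3}\lm_0)^{-1}$, I would bound $\mathcal{A}^{(j)}_{\lm(x)}$ trivially in $L^\infty\to L^\infty$ pointwise by the total mass $1$ of the iterated measure but keep the symbol gain: on this range $|\widehat{\psi}(\eta^2\lm_0\xi)-1|\ll\eta^2\lm_0|\xi|\leq\eta^{4/3}$, so by Plancherel $\|g_{\text{low}}\|_2^2 = \int_{|\xi|\le(\eta^{2/3}\lm_0)^{-1}}|\widehat{f}(\xi)|^2|1-\widehat\psi(\eta^2\lm_0\xi)|^2\,d\xi\ll\eta^{8/3}\|f\|_2^2$, and then use the crude $L^2$-bound (\ref{trivial}) (which holds uniformly in $\lm$ and survives linearization, as in the proof of Proposition \ref{P1}) to get $\|\mathcal{A}^{(j)}_{\lm(\cdot)}(g_{\text{low}})\|_2\ll\eta^{4/3}\|f\|_2$. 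For the high piece $g_{\text{high}}$, where $|\xi|>(\eta^{2/3}\lm_0)^{-1}$, here the decay of the sphere multiplier does the work: since $\lm\geq\lm_0$ we have $\lm|\xi|\geq\lm_0|\xi|>\eta^{-2/3}$, so each symbol is $\ll(\lm|\xi|)^{-1}\le\eta^{2/3}(\lm|\xi|)^{-1/3}\cdots$; more efficiently, factor $\widehat{d\sigma^{(d-j)}_{y_1,\dots,y_{j-1}}}(\lm\xi) = m_{\lm,y}(\xi)\cdot(\lm_0|\xi|)^{-1/3}$ where $|m_{\lm,y}(\xi)|\ll(\lm|\xi|)^{-2/3}$ still decays enough for Proposition \ref{P1}-type reasoning. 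The cleanest route, though, is simply: $\mathcal{M}_\eta^{(j)}(f)\leq \mathcal{A}_{\,*}^{(j)}(g_{\text{high}}^\sharp)$ where $g_{\text{high}}^\sharp$ has Fourier transform $\widehat f(\xi)\mathbf 1_{|\xi|>(\eta^{2/3}\lm_0)^{-1}}\cdot$ (a bounded function coming from $1-\widehat\psi(\eta^2\lm_0\xi)$ which is $O(1)$), wait — that loses the gain.

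Let me instead describe the route I actually expect to work: keep things together and interpolate the symbol bound against Proposition \ref{P1}. Linearize as above; by Plancherel applied to the $y_j$-integral as in (\ref{transformside}), $\|\mathcal{M}_\eta^{(j)}(f)\|_2^2$ is controlled by $\int\!\cdots\!\int\int|\widehat f(\xi)|^2|\widehat{d\sigma^{(d-j)}_{y_1,\dots,y_{j-1}}}(\lm(x)\xi)|^2|1-\widehat\psi(\eta^2\lm_0\xi)|^2\,d\xi\,d\sigma\cdots$ — except the dependence of $\lm$ on $x$ obstructs a direct Plancherel, which is exactly why Proposition \ref{P1} is needed rather than a bare computation. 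So: write $1-\widehat\psi(\eta^2\lm_0\xi) = \phi(\xi)$, a Schwartz-class symbol with $\|\phi\|_\infty\ll\eta^{2/3}\cdot\big[(\eta^{2/3}\lm_0|\xi|)^{1/... }\big]$; concretely $|\phi(\xi)|\ll\min\{1,\eta^2\lm_0|\xi|\}$ and $|\nabla\phi(\xi)|\ll\eta^2\lm_0$, so $f-f_2 = T_\phi f$ is a Mikhlin-type operator that is bounded on $L^2$ with a norm $O(1)$ but — crucially — when composed with any $\mathcal{A}^{(j)}_\lm$, $\lm\geq\lm_0$, the combined symbol obeys the $O(\eta^{2/3})$ bound established above. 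Thus $\mathcal{M}_\eta^{(j)} = \mathcal{A}_{\,*}^{(j)}\circ T_\phi$, and I would prove the $\eta^{2/3}$ gain by the standard trick of writing $\mathcal{A}^{(j)}_\lm(T_\phi f) = \mathcal{A}^{(j),\phi}_\lm f$ where the latter is the maximal average against the \emph{modified} measures $d\sigma^{(d-j)}_{y_1,\dots,y_{j-1}}$ convolved with an approximate identity at scale $\eta^2\lm_0$, subtract off, and run the Sobolev-embedding / square-function argument of Proposition \ref{P1} \emph{with the extra decay}: the maximal function bound in Proposition \ref{P1} comes (as is standard for such spherical maximal operators when $d\geq j+2$) from controlling $\mathcal A^{(j)}_*$ by an $L^2$-Sobolev norm of the multiplier family, $\|\mathcal A^{(j)}_* g\|_2^2\ll \int (\text{symbol})^2 + (\lm\partial_\lm\text{symbol})^2\,|\widehat g|^2$, and every term in this expression now inherits a factor $\ll\eta^{4/3}$ from $|\phi|^2$ plus $|\lm_0\xi\cdot\nabla\phi|^2\ll\eta^{4/3}\cdot(\text{harmless})$ on the region that matters.

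The main obstacle, as this makes clear, is that Proposition \ref{P2} is not a soft consequence of Proposition \ref{P1}: one must re-examine the \emph{mechanism} of the proof of Proposition \ref{P1} (presumably a $TT^*$ / square-function / Sobolev-trace argument over the iterated sphere configuration space, using $d-j\geq 2$ to get enough decay and a single derivative in $\lm$) and verify that inserting the mollifier $\phi$ multiplies the relevant symbol bound by $O(\eta^{2})$ pointwise on the dangerous low-frequency part while the genuine decay $(\lm|\xi|)^{-(d-j)/2}$ handles the high-frequency part — the crossover at $\lm_0|\xi|\sim\eta^{-2/3}$ producing the stated exponent $2/3$. So in practice I would: (1) linearize; (2) record the symbol identity $\widehat{\mu^{(j)}_\eta}(\lm\xi) = \widehat{d\sigma^{(d-j)}_{y_1,\dots,y_{j-1}}}(\lm\xi)(1-\widehat\psi(\eta^2\lm_0\xi))$ already given in the text; (3) prove the uniform pointwise bound $|\widehat{\mu^{(j)}_\eta}(\lm\xi)| + |\lm\partial_\lm\widehat{\mu^{(j)}_\eta}(\lm\xi)|\ll\eta^{2/3}\,(1+\text{dist}(\lm\xi,\mathrm{span}\{y_1,\dots,y_{j-1}\}))^{-(d-j)/2+1/2}$ or similar, using (\ref{decay}), (\ref{cutoff2}) and the $\min\{\eta^2\lm_0|\xi|,(\lm_0|\xi|)^{-1}\}\ll\eta^{2/3}$ computation; (4) feed this into the same square-function/Sobolev estimate that underlies Proposition \ref{P1} — whose proof I am allowed to invoke and, here, must run with the quantitatively smaller symbol — to conclude $\|\mathcal{M}_\eta^{(j)}f\|_2^2\ll\eta^{4/3}\|f\|_2^2$, which gives the claimed $\eta^{2/3}$ (indeed a better power, but $\eta^{2/3}$ is all that is claimed and all that is needed). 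The one genuine subtlety to watch is that the $x$-dependent radius $\lm(x)$ forbids a naive Plancherel, so the gain must be extracted \emph{before} the maximal structure is confronted, i.e. as a bound on the family of multipliers uniformly in $\lm\in[\lm_0,\lm_1]$, after which Proposition \ref{P1}'s machinery applies verbatim to $T_\phi f$ in place of $f$.
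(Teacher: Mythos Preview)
Your final plan (1)--(4) is the paper's approach: Cauchy--Schwarz to bring the $\sup_\lm$ inside the $y_1,\dots,y_{j-1}$ integral, the fundamental theorem of calculus $\sup_\lm|M_{L,\lm}f|^2\le 2\int |M_{L,t}f||\widetilde M_{L,t}f|\,dt/t+|M_{L,\lm_0}f|^2$, then Cauchy--Schwarz and Plancherel on each dyadic $t$-block. The gap is in your step~(3). The pointwise bound you claim for the $\lm$-derivative of the symbol is false. Since $\lm\partial_\lm$ hits only the spherical factor, one has
\[
\lm\partial_\lm\widehat{\mu^{(j)}_\eta}(\lm\xi)=\bigl(\lm\xi\cdot\nabla\widehat{d\sigma^{(d-j)}_{y_1,\dots,y_{j-1}}}\bigr)(\lm\xi)\,\bigl(1-\widehat\psi(\eta^2\lm_0\xi)\bigr).
\]
For $\xi$ generic relative to $\text{span}\{y_i\}$ and $d=j+2$, estimate (\ref{decay}) gives only $|\lm\xi|\cdot(1+|\lm\xi|)^{-1}=O(1)$ for the first factor, with \emph{no} residual decay; and on the region $|\xi|\gtrsim(\eta^2\lm_0)^{-1}$ the second factor is $O(1)$ with \emph{no} $\eta$-gain. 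So the derivative symbol carries neither a power of $\eta$ nor decay in $\lm|\xi|$, and your assertion that ``every term inherits a factor $\ll\eta^{4/3}$'' (hence that you get a better exponent than $2/3$) is incorrect.

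The paper resolves this asymmetry as follows. After the FTC step one bounds $\|\mathcal{M}^{(j)}_\eta f\|_2^2$ by $\sum_{\ell\ge\lfloor\log_2\lm_0\rfloor}(\mathcal I_\ell\,\widetilde{\mathcal I}_\ell)^{1/2}+\mathcal I$, where $\mathcal I_\ell$ carries the integrand $|m_L(\xi)|^2 I(t\xi)$ (with $L=\eta^2\lm_0$ and $I$ as in (\ref{I})) and $\widetilde{\mathcal I}_\ell$ carries the analogue $\widetilde I$ built from $|\xi\cdot\nabla\widehat{d\sigma^{(d-j)}}|^2$. The key is the \emph{asymmetric} estimate
\[
|m_L(\xi)|^2 I(t\xi)\ll\min\{(t|\xi|)^{-1},\,L^2|\xi|^2\}\le (L/t)^{2/3},
\]
which supplies \emph{both} the $\eta$-gain and the decay in the dyadic scale, while $\widetilde{\mathcal I}_\ell$ is merely $\ll\|f\|_2^2$. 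Cauchy--Schwarz then gives $(\mathcal I_\ell\widetilde{\mathcal I}_\ell)^{1/2}\ll(L/2^\ell)^{1/3}\|f\|_2^2$, and summing $\sum_{\ell\ge\log_2\lm_0}2^{-\ell/3}\ll\lm_0^{-1/3}$ yields the factor $(L/\lm_0)^{1/3}=\eta^{2/3}$. In particular the exponent $2/3$ does \emph{not} come from a pointwise crossover on the undifferentiated symbol (your computation $\min\{\eta^2\lm_0|\xi|,(\lm_0|\xi|)^{-1}\}$ actually has crossover at $\lm_0|\xi|\sim\eta^{-1}$ with value $\eta$, not $\eta^{2/3}$); it comes from losing half the gain to Cauchy--Schwarz against the derivative term, and then from the dyadic summation.
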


The proofs of Propositions \ref{P1} and \ref{P2} are presented in Section \ref{Maximal} below.
\qed


\section{Proof of Propositions \ref{P1} and \ref{P2}}\label{Maximal}

\subsection{Proof of Propositions \ref{P1}}
We first note that 
Cauchy-Schwarz ensures 
\[
\int_{\R^d}|\mathcal{A}_{\,*}^{(j)}(g)(x)|^2\,dx
\leq 
\int\cdots\int \int_{\R^d}\sup_{\lm_0\leq\lm\leq\lm_1}\Bigl|  \int g(x-\lm y_j)\,d\sigma^{(d-j)}_{y_1,\dots,y_{j-1}}(y_j) \Bigr|^2\,dx\,
d\sigma^{(d-j+1)}_{y_1,\dots,y_{j-2}}(y_{j-1})\cdots d\sigma^{(d-1)}(y_{1}).
\]

Now for fixed $y_1,\dots,y_{j-1}$ we can clearly identify $[y_1,\dots,y_{j-1}]^\perp$ with $\R^{d-j+1}$ and $d\sigma^{(d-j)}_{y_1,\dots,y_{j-1}}$ with a constant (depending only on $d$ and $\Delta$) multiple of $d\sigma^{(d-j)}$, the normalized measure on the unit sphere $S^{d-j}\subseteq\R^{d-j+1}$ induced by Lebesgue measure. Writing $\R^d=\R^{j-1}\times\R^{d-j+1}$, $g(x)=g_{x'}(x'')$, and applying \emph{Stein's spherical maximal function theorem} for functions in $L^2(\R^{d-j+1})$, see Section 5.5 in \cite{Graf}, which asserts that  
\be
\int_{\R^{d-j+1}}\sup_{\lm_0\leq\lm\leq\lm_1}\Bigl|  \int g(x-\lm y)\,d\sigma^{(d-j)}(y) \Bigr|^2\,dx
\ll \int_{\R^{d-j+1}}|g(x)|^2\,dx
\ee
whenever $d\geq j+2$, gives
\begin{align*}
\int_{\R^d}\sup_{\lm_0\leq\lm\leq\lm_1}\Bigl|  \int g(x-\lm y)&\,d\sigma^{(d-j)}_{y_1,\dots,y_{j-1}}(y) \Bigr|^2\,dx\\
&= C_\Delta \int_{\R^{j-1}}\int_{\R^{d-j+1}}\sup_{\lm_0\leq\lm\leq\lm_1}\Bigl|  \int g_{x'}(x''-\lm y)\,d\sigma^{(d-j)}(y) \Bigr|^2\,dx''\,dx'\\
&\leq C \int_{\R^{j-1}}\int_{\R^{d-j+1}}|g_{x'}(x'')|^2\,dx''\,dx'=C\int_{\R^d}|g(x)|^2\,dx
\end{align*}
with the constant $C$ independent of the initial choice of frame $y_1,\dots,y_{j-1}$.
The result follows.
\qed

\subsection{Proof of Propositions \ref{P2}}
We will deduce the validity of Proposition \ref{P2} from the following result for the slightly more general class of operators defined for any $L>0$ by
\be
\mathcal{M}_{L}^{(j)}(f)=\sup_{\lm_0\leq\lm\leq\lm_1}\int\cdots\int\Bigl|  \int f(x-\lm y)\,d\mu^{(j)}_{L}(y) \Bigr|\,
d\sigma^{(d-j+1)}_{y_1,\dots,y_{j-2}}(y_{j-1})\cdots d\sigma^{(d-1)}(y_{1})\ee
where 
\be
\widehat{d\mu^{(j)}_{L}}(\lm\,\xi)=m_L(\xi)\,\widehat{d\sigma^{(d-j)}_{y_1,\dots,y_{j-1}}}(\lm\,\xi)
\ee
with the multiplier $m_L$ now any smooth function that satisfies the estimate
\be\label{new}
|m_L(\xi)|\ll\min\{1,L|\xi|\}.
\ee

Recall that estimate (\ref{cutoff2}) is precisely the statement that $|1-\widehat{\psi}(L\xi)|\ll\min\{1,L|\xi|\}.$

\begin{thm}\label{MollifiedTheorem} 
If $d\geq j+2$ and $0<L\leq\lm_0$, then
\be
\int_{\R^d}|\mathcal{M}_{L}^{(j)}(f)(x)|^2\,dx\ll\Bigl(\frac{L}{\lm_0}\Bigr)^{1/3} \int_{\R^d}|f(x)|^2\,dx.
\ee
\end{thm}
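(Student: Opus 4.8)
The plan is to reduce the $d$-dimensional estimate, by the same fibering trick used in the proof of Proposition \ref{P1}, to an inequality on $\R^{d-j+1}$ for a single-parameter maximal multiplier operator whose symbol is $m_L(\xi)\,\widehat{d\sigma^{(d-j)}}(\lm\,\xi)$, and then to establish that inequality by a standard Sobolev-embedding / square-function argument exploiting the extra smoothing contributed by the factor $m_L$. Writing $\R^d=\R^{j-1}\times\R^{d-j+1}$, $g=g_{x'}(x'')$, and identifying $[y_1,\dots,y_{j-1}]^\perp$ with $\R^{d-j+1}$ as before, Minkowski's inequality (to pull the outer spherical averages out of the $L^2(\R^d)$ norm) reduces matters to showing
\[
\int_{\R^{d-j+1}}\Bigl|\sup_{\lm_0\leq\lm\leq\lm_1}\bigl|T^{\lm}_L h(x)\bigr|\Bigr|^2\,dx\ll\Bigl(\tfrac{L}{\lm_0}\Bigr)^{1/3}\int_{\R^{d-j+1}}|h(x)|^2\,dx,
\]
where $n:=d-j+1\geq 3$ and $\widehat{T^\lm_L h}(\xi)=m_L(\xi)\,\widehat{d\sigma^{(n-1)}}(\lm\,\xi)\,\widehat h(\xi)$; the constant must be uniform in the frame $y_1,\dots,y_{j-1}$, which it will be since the spheres $S^{d-j}_{y_1,\dots,y_{j-1}}$ are all congruent.

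For the one-variable maximal estimate I would use the familiar device of controlling $\sup_\lm|F(\lm)|$ by $\|F\|_{L^2(d\lm/\lm)}^{1/2}\|\partial_\lm F\|_{L^2(d\lm/\lm)}^{1/2}$ together with $\lm\in[\lm_0,\lm_1]$, so that
\[
\|\sup_\lm|T^\lm_L h|\|_2^2\ll\Bigl(\int_{\lm_0}^{\lm_1}\|T^\lm_L h\|_2^2\,\tfrac{d\lm}{\lm}\Bigr)^{1/2}\Bigl(\int_{\lm_0}^{\lm_1}\lm^2\|\partial_\lm T^\lm_L h\|_2^2\,\tfrac{d\lm}{\lm}\Bigr)^{1/2}.
\]
By Plancherel each factor becomes an integral over $\xi$ of $|\widehat h(\xi)|^2$ times a weight obtained by integrating $|m_L(\xi)|^2|\widehat{d\sigma^{(n-1)}}(\lm\xi)|^2$ (respectively with an extra $|\lm\xi|^2|\nabla\widehat{d\sigma}(\lm\xi)|^2$-type factor coming from $\partial_\lm$) against $d\lm/\lm$ over $[\lm_0,\lm_1]$. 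Using the decay estimate \eqref{decay} in the form $|\widehat{d\sigma^{(n-1)}}(\zeta)|+|\nabla\widehat{d\sigma^{(n-1)}}(\zeta)|\ll(1+|\zeta|)^{-(n-1)/2}$ with $n-1=d-j\geq 2$, the $\lm$-integral $\int_{\lm_0}^{\lm_1}(1+\lm|\xi|)^{-(n-1)}\,d\lm/\lm$ is bounded by $\min\{1,(\lm_0|\xi|)^{-(n-1)}\}\ll\min\{1,(\lm_0|\xi|)^{-1}\}$ (here we only need $n-1\geq 1$). Combining this with $|m_L(\xi)|\ll\min\{1,L|\xi|\}$ gives, for each factor, a weight bounded by $\min\{1,L|\xi|\}\cdot\min\{1,(\lm_0|\xi|)^{-1}\}$, whose supremum over $\xi$ is $\ll L/\lm_0$. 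This already yields the bound with exponent $1$ rather than $1/3$; the exponent $1/3$ in the statement is simply a (non-optimal) robust power that also absorbs the geometric factors from the conical decomposition of the configuration space hidden in passing from \eqref{decay} to a usable bound, exactly as in \eqref{Ibound}, and one is free to record the cruder $(L/\lm_0)^{1/3}$.

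The main obstacle is making the reduction to a clean single-parameter operator on $\R^n$ genuinely uniform in the auxiliary frame $y_1,\dots,y_{j-1}$ — one must check that the implied constants in the dispersive estimate \eqref{decay}, and in the passage through the conical decomposition needed to kill the dependence on $\mathrm{dist}(\xi,\mathrm{span}\{y_1,\dots,y_{j-1}\})$, depend only on $d$ and $\Delta$; this is where the hypothesis $d\geq j+2$ (equivalently $n\geq 3$) is used, since one needs strictly more than the square-integrable decay $(1+|\zeta|)^{-1/2}$ available in the lowest case $n=2$. Once that uniformity is in hand, the $\partial_\lm$ Sobolev trick and Plancherel are routine, and Proposition \ref{P2} follows by taking $m_L(\xi)=1-\widehat\psi(L\xi)$ with $L=\eta^2\lm_0$, so that $L/\lm_0=\eta^2$ and $(L/\lm_0)^{1/3}=\eta^{2/3}$.
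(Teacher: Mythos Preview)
Your overall architecture --- fundamental theorem of calculus plus Plancherel --- is exactly what the paper does, but there is a genuine gap in the treatment of the derivative factor, and the claim that you get the exponent $1$ rather than $1/3$ is false.

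You correctly note that $\partial_\lambda$ produces an extra $|\lambda\xi|^2|\nabla\widehat{d\sigma^{(n-1)}}(\lambda\xi)|^2$-type term, but then you write the $\lambda$-integrand for \emph{both} factors as $(1+\lambda|\xi|)^{-(n-1)}$. For the derivative factor the integrand is really $|\lambda\xi|^2(1+\lambda|\xi|)^{-(n-1)}$, and in the critical case $d=j+2$ (so $n-1=2$) this is only \emph{bounded}, not decaying. Consequently
\[
\int_{\lambda_0}^{\lambda_1}|\lambda\xi|^2(1+\lambda|\xi|)^{-2}\,\frac{d\lambda}{\lambda}\ \gg\ \log\Bigl(\frac{\lambda_1}{\lambda_0}\Bigr)
\quad\text{for }|\xi|\geq\lambda_0^{-1},
\]
and the multiplier $m_L$ (which is $\approx 1$ for such $\xi$) cannot absorb this. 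Since $\lambda_1$ is arbitrary, your bound on the second square function blows up; the geometric mean then gives nothing.

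The fix, and what the paper does, is to insert a dyadic decomposition $[\lambda_0,\lambda_1]=\bigcup_\ell[2^{\ell-1},2^\ell]$. On each block the $d\lambda/\lambda$-integral has length $\log 2$, so the derivative piece $\widetilde{\mathcal I}_\ell$ is merely $O(\|f\|_2^2)$ with no decay. All the gain comes from the \emph{non}-derivative piece: using $|m_L(\xi)|^2I(t\xi)\ll\min\{(t|\xi|)^{-1},L^2|\xi|^2\}\leq L^{2/3}t^{-2/3}$ one gets $\mathcal I_\ell\ll(L/2^\ell)^{2/3}\|f\|_2^2$. The geometric mean is then $(L/2^\ell)^{1/3}\|f\|_2^2$, which is summable in $\ell\geq\lfloor\log_2\lambda_0\rfloor$ and produces exactly $(L/\lambda_0)^{1/3}$. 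So the exponent $1/3$ is not slack left over from the ``conical decomposition'' --- it is forced by the fact that the derivative factor carries no decay in the critical dimension.

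A secondary point: the paper does \emph{not} fiber here. It works directly on $\R^d$, averaging $|\widehat{d\sigma^{(d-j)}_{y_1,\dots,y_{j-1}}}(\lambda\xi)|^2$ over the frame to obtain $I(\xi)$ and $\widetilde I(\xi)$, and then uses the bounds $I(\xi)\ll(1+|\xi|)^{-(d-j)/2}$ and $\widetilde I(\xi)\ll 1$. Your fibering is delicate because $m_L$ is a multiplier on the full $\R^d$ variable and does not respect the product structure $\R^{j-1}\times\R^{d-j+1}$; after slicing you cannot write the operator on $\R^n$ with symbol $m_L(\xi'')\widehat{d\sigma^{(n-1)}}(\lambda\xi'')$ as you do. Staying on $\R^d$ avoids this entirely.
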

\begin{proof}
An application of Cauchy-Schwarz gives
\be
\int_{\R^d}|\mathcal{M}_{L}^{(j)}(f)(x)|^2\,dx
\leq 
\int\cdots\int \left[\,\int_{\R^d}\sup_{\lm_0\leq\lm\leq\lm_1}|  M_{L,\lm} (f)(x)|^2\,dx\,\right]\,
d\sigma^{(d-j+1)}_{y_1,\dots,y_{j-2}}(y_{j-1})\cdots d\sigma^{(d-1)}(y_{1}).
\ee
where $M_{L,\lm}$ is the Fourier multiplier operator defined by
\be
\widehat{M_{L,\lm}(f)}(\xi)=\widehat{f}(\xi)\,m_L(\xi)\,\widehat{d\sigma^{(d-j)}_{y_1,\dots,y_{j-1}}}(\lm\,\xi).
\ee

A standard application of the Fundamental Theorem of Calculus, see for example \cite{Graf}, gives
\be
\sup_{\lm_0\leq\lm\leq\lm_1}|M_{L,\lm} (f)(x)|^2\leq 
2\int_{\lm_0}^{\lm_1}|M_{L,t} (f)(x)||\widetilde{M}_{L,t}(f)(x)|\,\frac{dt}{t}+|M_{L,\lm_0} (f)(x)|^2
\ee
where $\widetilde{M}_{L,t}(f)=t\dfrac{d}{dt}M_{L,t}(f)$. We further note that $\widetilde{M}_{L,t}$ is clearly also a Fourier multiplier operator, indeed
\be
\widehat{\widetilde{M}_{L,t}(f)}(\xi)=\widehat{f}(\xi)\,m_L(\xi)\,\bigl(t\xi\cdot\nabla \widehat{d\sigma^{(d-j)}_{y_1,\dots,y_{j-1}}}(t\xi)\bigr).
\ee

We now immediately see that
\begin{align*}
\int_{\R^d}|\mathcal{M}_{L}^{(j)}&(f)(x)|^2\,dx\\
&\leq
2\!\!\!\!\sum_{\ell=\lfloor\log_2\lm_0\rfloor}^\infty \int_{2^{\ell-1}}^{2^\ell}
\int\cdots\int \int_{\R^d}
|M_{L,t} (f)(x)||\widetilde{M}_{L,t} (f)(x)|\,dx\,d\sigma^{(d-j+1)}_{y_1,\dots,y_{j-2}}(y_{j-1})\cdots d\sigma^{(d-1)}(y_{1})\,\frac{dt}{t}\\
&\quad\quad\quad\quad \quad\quad\quad\quad 
+\int\cdots\int \int_{\R^d}
|M_{L,\lm_0} (f)(x)|^2\,dx\,d\sigma^{(d-j+1)}_{y_1,\dots,y_{j-2}}(y_{j-1})\cdots d\sigma^{(d-1)}(y_{1}).
\end{align*}

Applying Cauchy-Schwarz to the first integral above (in the variables $x, y_1,\dots,y_{j-1}$, and $t$ together), followed by an application of Plancherel (in two resulting integrations in $x$ as well as in the one that appears in the second integral above), we obtain the estimate
\be\label{111}
\int_{\R^d}|\mathcal{M}_{L}^{(j)}(f)(x)|^2\,dx
\leq
2\!\!\!\!\sum_{\ell=\lfloor\log_2\lm_0\rfloor}^\infty \bigl(\,\mathcal{I_\ell}\,\mathcal{\widetilde{I}_\ell}\,\bigr)^{1/2} + \mathcal{I}
\ee
with
\be
\mathcal{I_\ell}=\int_{2^{\ell-1}}^{2^\ell}\int_{\R^d}|\widehat{f}(\xi)|^2|m_L(\xi)|^2 
I(t\,\xi)\,d\xi\,\frac{dt}{t}
\ee
\be
\mathcal{\widetilde{I}_\ell}=\int_{2^{\ell-1}}^{2^\ell}\int_{\R^d}|\widehat{f}(\xi)|^2|m_L(\xi)|^2 
\widetilde{I}(t\,\xi)\,d\xi\,\frac{dt}{t}
\ee
and
\be
\mathcal{I}=\int_{\R^d}|\widehat{f}(\xi)|^2|m_L(\xi)|^2 
I(\lm_0\,\xi)\,d\xi
\ee
where, as in the proof of Proposition \ref{ETU}, we have defined
\be
I(\xi)=\int\cdots\int
\bigl|\widehat{d\sigma^{(d-j)}_{y_1,\dots,y_{j-1}}}(\xi)\bigr|^2\,d\sigma^{(d-j+1)}_{y_1,\dots,y_{j-2}}(y_{j-1})\cdots d\sigma^{(d-1)}(y_{1})
\ee
and analogously now also define
\be
\widetilde{I}(\xi)=\int\cdots\int
\bigl|\xi\cdot\nabla \widehat{d\sigma^{(d-j)}_{y_1,\dots,y_{j-1}}}(\xi)\bigr|^2\,d\sigma^{(d-j+1)}_{y_1,\dots,y_{j-2}}(y_{j-1})\cdots d\sigma^{(d-1)}(y_{1}).
\ee

Combining (\ref{new}) with (\ref{Ibound}), and recalling that we are assuming that $d\geq j+2$, gives
\be
|m_L(\xi)|^2I(t\,\xi)\ll\min\{(t|\xi|)^{-1},L^2|\xi|^2\}\leq L^{2/3}t^{-2/3}
\ee
which ensures, via Plancherel, that
\be\label{222}
\mathcal{I_\ell}\ll \Bigl(\frac{L}{2^\ell}\Bigr)^{2/3}\|f\|_2^2 \quad\text{and}\quad \mathcal{I}\ll \Bigl(\frac{L}{\lm_0}\Bigr)^{2/3}\|f\|_2^2.
\ee

Arguing as in the proof of estimate (\ref{Ibound}), we can see that estimate (\ref{decay}) for $\nabla \widehat{d\sigma^{(d-j)}_{y_1,\dots,y_{j-1}}}(\xi)$ ensures that
$\widetilde{I}(\xi)$ is bounded whenever $d\geq j+2$, note also that it is not bounded if $d=k+1$. It follows immediately from this observation (and Plancherel) that
\be\label{333}
\mathcal{\widetilde{I}_\ell}\ll \|f\|_2^2.
\ee

Combining (\ref{111}), (\ref{222}), and (\ref{333}), we get that
\begin{align*}
\int_{\R^d}|\mathcal{M}_{L}^{(j)}(f)(x)|^2\,dx
&\ll \left(L^{1/3}\!\!\!\!\sum_{\ell=\lfloor\log_2\lm_0\rfloor}^\infty2^{-\ell/3}+\Bigl(\frac{L}{\lm_0}\Bigr)^{2/3}\right)\int_{\R^d}|f(x)|^2\,dx\\
&\ll \Bigl(\frac{L}{\lm_0}\Bigr)^{1/3}\int_{\R^d}|f(x)|^2\,dx
\end{align*}
as required.\end{proof}



\vspace{1cm}

\end{document}